\title{An Algorithm to Compute the Topological Euler Characteristic, Chern-Schwartz-MacPherson Class and Segre Class of Projective Varieties}
\author{
        Martin Helmer  \\ \normalsize
        Department of Applied Mathematics,\\ \normalsize
 University of Western Ontario,\\ \normalsize
London, Canada, N6A 5B7\\ \normalsize
  \texttt{martin.helmer2@gmail.com}}
\date{\today}
 \definecolor{Ftitle}{RGB}{11,46,108}
\definecolor{line}{RGB}{87,39,117}
\colorlet{tableheadcolor}{Ftitle!25} 
\colorlet{tablerowcolor}{gray!10} 
\def\expandafter\UrlBreaks\expandafter{\UrlBreaks
  \do\a\do\b\do\c\do\d\do\e\do\f\do\g\do\h\do\i\do\j%
  \do\k\do\l\do\m\do\n\do\o\do\p\do\q\do\r\do\s\do\t%
  \do\u\do\v\do\w\do\x\do\y\do\z\do\A\do\B\do\C\do\D%
  \do\E\do\F\do\G\do\H\do\I\do\J\do\K\do\L\do\M\do\N%
  \do\O\do\P\do\Q\do\R\do\S\do\T\do\U\do\V\do\W\do\X%
  \do\Y\do\Z}
\newcommand{\codim}{\mathrm{codim} }
\newcommand{\Spec}{\mathrm{Spec}}
\newcommand{\Proj}{\mathrm{Proj}}
\newcommand{\N}{ \mathbb{N}}
\newcommand{\ZZ}{ \mathbb{Z}}
\newcommand{\mapdefdot}[4]
{
                \left.
                \begin{array}{ll}
                        #1 & \dashrightarrow #2 \\
                        #3 & \mapsto #4
                \end{array}
        \right.
}
\newcommand{\U}{ \mathbb{U}}
\newcommand{\oo}{\mathcal{O}}
\newcommand{\pp}{\mathbb{P}}
\newtheorem{theorem}{Theorem}[section]
\newtheorem{propn}[theorem]{Proposition}
\newtheorem{corr}[theorem]{Corollary}
\newtheorem{defn}[theorem]{Definition}
\newtheorem{example}[theorem]{Example}
\newtheorem{remark}[theorem]{Remark}
\newtheorem{algorithm}{Algorithm}
\begin{document}

\maketitle
\begin{abstract} \noindent
Let $V$ be a closed subscheme of a projective space $\pp^n$. We give an algorithm to compute the Chern-Schwartz-MacPherson class, and the Euler characteristic of $V$ and an algorithm to compute the Segre class of $ V$. The algorithms can be implemented using either symbolic or numerical methods. The algorithms are based on a new method for calculating the projective degrees of a rational map defined by a homogeneous ideal. Relationships between the algorithms developed here and other existing algorithms are discussed. The algorithms are tested on several examples and are found to perform favourably compared to current algorithms for computing Chern-Schwartz-MacPherson classes, Segre classes and Euler characteristics.
\end{abstract}
{\bf Keywords:} Euler characteristic, Chern-Schwartz-MacPherson class, Segre class, computer algebra, computational intersection theory.
\section{Introduction}
The topological Euler characteristic is an important invariant in a wide variety of areas of mathematics and has been studied by numerous authors in many different contexts. In this note we describe an algorithm to compute the Segre class, and an algorithm to compute the Chern-Schwartz-MacPherson class and the Euler characteristic of a projective variety over an algebraically closed field of characteristic zero. In particular, given the ideal $I$ defining a projective variety  $V$ in $\pp^n$  we will compute the pushforward to $\pp^n$ of both the Segre class of $V$ in $\pp^n$ and the Chern-Schwartz-MacPherson class of $V$ (we abuse notation and denote the pushforwards to $\pp^n$ as $s(V,\pp^n)$ and $c_{SM}(V)$ respectively). From $c_{SM}(V)$ we may immediately obtain the Euler characteristic of $V$, $\chi(V)$ using the well-known relation which states that $\chi(V)$ is equal to the degree of the zero dimensional component of $c_{SM}(V)$. The algorithm described may be implemented either symbolically, with the computations relying on Gr\"{o}bner bases calculations, or numerically using homotopy continuation. 

The methods to compute Segre classes and Chern-Schwartz-MacPherson classes described here are based on several known formulas due to Aluffi \cite{aluffi1999chern,aluffi2003computing}, and on the notion of the projective degrees of a rational map as expressed in Harris \cite{harris1992algebraic}. The main result of this note is Theorem \ref{projective_deg_theorem} which gives a method to compute projective degrees. 

We now give an example of the computation of the Segre class, the $c_{SM}$ class and the Euler characteristic for a singular projective variety. Note that since the variety $V$ considered in the example is singular the results $c_{SM}(V)$ and $\chi(V)$ could not be obtained with standard Chern class computations. 
\begin{example}
Let $V=V(I)$ be the subvariety of $\pp^4$ defined by the ideal $I=(4x_3x_2x_4x_1-x_0^3x_1,x_0x_1x_3x_4-x_2^3x_3)$ in $k[x_0,x_1,x_2,x_3,x_4]$. Also let $A_*(\pp^4) \cong \ZZ[h]/(h^{5})$ be the Chow ring of $\pp^4$. 

Using Algorithm \ref{algorithm:SegreAlg} with input $I$ we obtain the Segre class$$
s(V,\pp^n)=768h^4  - 128h^3  + 16h^2\in A_*(\pp^4).
$$Using Algorithm \ref{algorithm:CSMAlg} with input $I$ we obtain the Chern-Schwartz-MacPherson class $$
c_{SM}(V)=5h^4  + 8h^3  + 12h^2 \in A_*(\pp^4)
$$ and/or the Euler characteristic $
\chi(V)= 5.
$  \label{example:csm_ex}
\end{example}

The existence of a functorial theory of Chern classes for singular varieties, in terms of a natural transformation from
the functor of constructible functions to some nice homology theory, and its relation to the Euler characteristic, was conjectured by Deligne and Grothendieck in the 1960's. In the 1974 article~\cite{macpherson1974chern}, MacPherson proved the existence of such a transformation, introducing a new notion of Chern classes for singular algebraic varieties. Independently in the 1960's Schwartz \cite{schwartz1965classes} defined a theory of Chern classes for singular varieties in relative cohomology. It was later shown in a paper of Brasselet and Schwartz \cite{brasselet1981classes} that these two different notions were in fact equivalent. 

The desire to compute $c_{SM}$ classes explicitly is motivated partially by the relation to the Euler characteristic. In addition to being an important topological invariant, the Euler characteristic has applications to problems of maximum likliehood estimation in algebraic statistics \cite{huh2012maximum} and applications to string theory in physics such as \cite{collinucci2009d} and \cite{aluffi2010new}. The Chern-Schwartz-MacPherson class has also been directly related to problems in string theory in \cite{aluffi2009cherntadpole}. 

The organization of the remainder of this note is as follows. In Section \ref{BackgroundSection} we state the problem we wish to consider and review the definitions of the $c_{SM}$ class and Segre class. 

In Section \ref{section:Review}, we briefly review relevant background on the projective degrees of a rational map and state known formulas which expresses the Segre class and $c_{SM}$ class in terms of these projective degrees. Also in Section \ref{section:Review} we review previous algorithms for the computation of Segre and $c_{SM}$ classes. Specifically we review algorithms of Aluffi \cite{aluffi2003computing} and Eklund, Jost and Peterson \cite{Jost} for the computation of Segre classes and we review algorithms of Aluffi \cite{aluffi2003computing} and Jost \cite{jost2013algorithm} for the computation of $c_{SM}$ classes. Additionally we explain the relationship between the residual degrees computed by Eklund, Jost and Peterson in \cite{Jost} and the projective degrees in \eqref{eq:degR_eq_gi}. In light of this relationship one could see Algorithm \ref{algorithm:SegreAlg} and Algorithm \ref{algorithm:CSMAlg} as refinements of the algorithms of \cite{Jost} and \cite{jost2013algorithm} respectively; however we note that these methods are developed using very different theoretical tools, and a priori there is no obvious relationship between them.

In Section \ref{subsectionComp_CSM}, we describe the main ideas underlying the algorithms. Especially we state and prove Theorem \ref{projective_deg_theorem} which is the main result of this note and which gives a new formula for calculating the projective degrees of a rational map defined by a homogeneous ideal. 

In Section~\ref{algorithms}, we give a detailed description of the algorithms. In Algorithm~\ref{algorithm:csm_polar} we show how the result of Theorem~\ref{projective_deg_theorem} can be used to compute the projective degrees of a rational map using a computer algebra system. We then apply Algorithm~\ref{algorithm:csm_polar} to give Algorithm \ref{algorithm:SegreAlg} which computes the Segre class and Algorithm \ref{algorithm:CSMAlg} which computes the $c_{SM}$ class.

In Section \ref{section:Performance}, we discuss the performance of our algorithm to compute Segre classes (Algorithm \ref{algorithm:SegreAlg}) and our algorithm to compute the $c_{SM}$ class (Algorithm \ref{algorithm:CSMAlg}). Run time performance for Algorithm \ref{algorithm:SegreAlg} is compared with previous algorithms of Aluffi \cite{aluffi2003computing} and of Eklund, Jost and Peterson \cite{Jost} which also compute Segre classes. The results of the running time comparisons  for Segre classes are summarized in Table \ref{table:SegreResults}; we see that our algorithm performs favourably in most cases. Run time performance for Algorithm \ref{algorithm:CSMAlg} is compared with previous algorithms of Aluffi \cite{aluffi2003computing} and of Jost \cite{jost2013algorithm} which also compute the $c_{SM}$ class and/or the Euler characteristic. We also compare the Macaulay2 \cite{M2} implementation of Algorithm \ref{algorithm:CSMAlg} to the Macaulay2 built in routine euler which calculates Hodge numbers to compute the Euler characteristic. In all cases Algorithm \ref{algorithm:CSMAlg} performs favourably in comparison to other known algorithms. The results are summarized in Table \ref{table:results}.

The Macaulay2 \cite{M2} and Sage \cite{sage} implementations of our algorithm for computing $c_{SM}$ classes, Euler characteristics and Segre classes of projective varieties can be found at \url{https://github.com/Martin-Helmer/char-class-calc}. The Macaulay2 \cite{M2} implementation is also available as part of the ``CharacteristicClasses'' package in Macaulay2 version 1.7 and above and can be accessed using the option ``Algorithm$=>$ProjectiveDegree", see the Macually2 documentation \url{http://www.math.uiuc.edu/Macaulay2/doc/Macaulay2-1.7/share/doc/Macaulay2/CharacteristicClasses/html/} for further details.

\section{Problem and Definitions} \label{BackgroundSection}
Suppose $V$ is an arbitrary subscheme of a projective space $\pp^n$ over an algebraically closed field of characteristic zero. The problem we wish to consider is that of devising an effective and practical algorithmic method to compute the Segre class $s(V,\pp^n)$ and the Chern-Schwartz-MacPherson class $c_{SM}(V)$ as elements of the Chow ring of $\pp^n$. An algorithm which computes $c_{SM}(V)$ automatically give us the Euler characteristic $\chi(V)$, since this information is contained directly in $c_{SM}(V)$. 


For $V$ a proper closed subscheme of a variety $W$, we may define the Segre class of $V$ in $W$ as $$
s(V,W)= \sum_{j\geq 1}(-1)^{j-1}\eta_*(\tilde{V}^j)
$$ where $\tilde{V}$ is the exceptional divisor of the blow-up of $W$ along $V$, $Bl_VW$, $\eta: \tilde{V} \to V$ is the projection and the class $\tilde{V}^k$ is the $k$-th self intersection of $\tilde{V}$. See Fulton \cite[\S 4.2.2]{fulton} for further details. In all cases considered in this note we will have $W=\pp^n$. Throughout the remainder of this section we consider possibly singular closed subschemes, $V$, of the projective space $\pp^n$ over an algebraically closed field~${k}$. 
 
The Chow ring of a $i$-dimensional nonsingular variety $V$ is   denoted by $A_*(V)= \oplus_{\ell=0}^iA_{\ell}(V)$, where $A_{\ell}(V)$ is the Chow group of $X$ having dimension $\ell$. Recall that the Chow groups of $V$ are the groups, $A_{\ell}$, of $\ell$-cycles on $V$ modulo rational equivalence. In what follows we will work only in the Chow ring of $\pp^n$, $A_*(\pp^n) \cong \ZZ[h]/(h^{n+1})$, where $h=c_1(\oo_{\pp^n}(1))$ is the equivalence class of a hyperplane in $\pp^n$, hence a hypersurface $W$ of degree $d$ in $\pp^n$ is represented as $[W]=d\cdot h$ in $A_*(\pp^n) $ (for more details see Fulton \cite{fulton}).

The total Chern class of a $j$-dimensional nonsingular projective variety $V$ is defined as $c(V)=c(TV) \cap [V]$ in the Chow ring of $V$, $A_*(V)$. As with $c_{SM}$ and Segre classes, we will abuse notation and write $c(V)$ for the pushforward to $\pp^n$ of the total Chern class of $V$. As a consequence of the Hirzebruch-Riemann-Roch theorem, we have that the degree of the zero dimensional component of the total Chern class of a projective variety is equal to the Euler characteristic, that is \begin{equation}
\int c(TV) \cap [V]=\chi(V). \label{eq:chern_euler_non_singular}
\end{equation}Here $\int \alpha$ denotes the degree of the zero dimensional component of the class $\alpha \in A_*(\pp^n)$, i.e. the degree of the part of $\alpha$ in $A_0(\pp^n)$.

There are several known generalizations of the total Chern class to singular varieties. All of these notions agree with $c(TV) \cap [V]$ for nonsingular $V$, however the Chern-Swartz-Macpherson class is the only one of these that satisfies a property analogous to (\ref{eq:chern_euler_non_singular}) for any $V$, i.e. \begin{equation}
\int c_{SM}(V)=\chi(V). \label{eq:csm_euler}
\end{equation} 

 We review here the construction of the $c_{SM}$ classes, given in the manner considered by MacPherson \cite{macpherson1974chern}. For a scheme $V$, let $\mathcal{C}(V )$ denote the abelian group of finite linear combinations $\sum_W m_W \mathfrak{1}_W$, where $W$ are (closed) subvarieties of $V$, $m_W \in \ZZ$, and $\mathfrak{1}_W$ denotes the function that is $1$ in $W$, and $0$ outside of $W$. Elements  $f\in \mathcal{C}(V )$ are known as constructible functions and the group  $\mathcal{C}(V )$ is referred to as the group of constructible functions on $V$. To make $\mathcal{C}$ into a functor we let $\mathcal{C}$ map a scheme $V$ to the group of constructible functions on $V$ and a proper morphism $f: V_1 \to V_2$  is mapped by $\mathcal{C}$ to $$\mathcal{C}(f)(\mathfrak{1}_W)(p)=\chi(f^{-1}(p) \cap W), \;\;\; W \subset V_1, \; p\in V_2 \; \mathrm{a \; closed \; point}.$$ 

Another functor from algebraic varieties to albelian groups is the Chow group functor $\mathcal{A}_*$. The $c_{SM}$ class may be realized as a natural transform between these two functors.   
\begin{defn}
The Chern-Schwartz-MacPherson class is the unique natural transformation between the constructible function functor and the Chow group functor, that is $c_{SM}: \mathcal{C}\to \mathcal{A}_*$ is the unique natural transform satisfying: \begin{itemize}
\item (\textit{Normalization}) $ c_{SM}(\mathfrak{1}_V)=c(TV) \cap [V] $ for $V $ non-singular  and complete.
\item (\textit{Naturality}) $f_{*}(c_{SM}(\phi))=c_{SM}(\mathcal{C}(f)(\phi))$, for $f:X \to Y$ a proper transform of projective varieties, $\phi$ a constructible function on $X$. 
\end{itemize}
\end{defn}
For a scheme $V$ let $V_{red}$ denote the support of $V$, the notation $c_{SM}(V)$ is taken to mean $c_{SM}(\mathfrak{1}_V)$ and hence, since $\mathfrak{1}_V=\mathfrak{1}_{V_{red}} $, we denote $c_{SM}(V)=c_{SM}(V_{red})$. 

To see how the $c_{SM}$ class satisfies the relation (\ref{eq:csm_euler}) consider the morphism $\mathfrak{f}:V \to \mathrm{point} ,$ applying the naturality property of the $c_{SM}$ class we have \small \begin{equation*}
\mathfrak{f}_{*}(c_{SM}(V))= c_{SM}(\mathcal{C}(\mathfrak{f})(\mathfrak{1}_V ))=c_{SM}(\chi(V)\mathfrak{1}_{\mathrm{point}})=\chi(V)c_{SM}(\mathrm{point})=\chi(V)[\mathrm{point}].
\end{equation*} \normalsize This gives us (\ref{eq:csm_euler}). 
Note that the $c_{SM}$ classes (and constructible functions) also satisfy the same inclusion/exclusion relation as the Euler characteristic. Specifically for $V_1,V_2$ subschemes of $\pp^n$ we have
, i.e. for the Euler characteristic we have $$
\chi(V_1 \cup V_2) = \chi (V_1) \chi(V_2) -\chi(V_1\cap V_2).
$$ Constructible functions inherit this property from the Euler characteristic via the definition of the constructible function functor, specifically we have $\mathfrak{1}_{V_1 \cup V_2}= \mathfrak{1}_{V_1 }+\mathfrak{1}_{V_2}-\mathfrak{1}_{V_1 \cap V_2}$.  From this we see that the $c_{SM} $ classes will also possess an inclusion/exclusion property, giving us the relation  
\begin{equation}
c_{SM}(V_1 \cap V_2)=c_{SM}(V_1)+c_{SM}(V_2)-c_{SM}(V_1 \cup V_2).
\label{eq:csm_inclusion_exclusion}
\end{equation}

To give the reader a more intuitive understanding of the geometric information contained in the $c_{SM}$ class we recall a result of Aluffi \cite{aluffi2013euler} which states that when $V $ is a subscheme of $\pp^n$ then $c_{SM}(V)$ contains the Euler characteristics of $V$ and those of general linear sections of $V$ for each codimension. In this way one may consider $c_{SM}(V)$ as a more refined version of the Euler characteristic in $\pp^n$. Specifically, if $\dim(V)=m$, starting from $c_{SM}(V)$ we may directly obtain the list of invariants $$\chi(V),\chi(V \cap L_1), \chi(V \cap L_1\cap L_2), \dots,\chi(V \cap L_1\cap \cdots \cap  L_m) $$ where $L_1,\dots, L_m$ are general hyperplanes. Conversely from the list of Euler characteristics above we could obtain $c_{SM}(V)$, i.e. there exists an involution between the Euler characteristics of general linear sections and the $c_{SM}$ class in this setting. This relationship is given explicitly in Theorem 1.1 of Aluffi \cite{aluffi2013euler}, we give an example of this below. 

\begin{example} 
Consider again the subvariety of $\pp^4$ given by $V=V(4x_3x_2x_4x_1-x_0^3x_1,x_0x_1x_3x_4-x_2^3x_3)$. We know from Example \ref{example:csm_ex} that $c_{SM}(V)=5h^4  + 8h^3  + 12h^2 $. To obtain the Euler characteristics of the general linear sections of $V$ we may apply an involution formula given by Aluffi in \cite[Theorem 1.1]{aluffi2013euler}, specifically:
\begin{itemize}
\item First consider the polynomial $p(t)= 5+8t+12t^2 \in \ZZ[t]/(t^5) $ given by the coefficients of the $c_{SM}$ class above.
\item Next apply Aluffi's involution $$p(t) \mapsto \mathcal{I}(p):= \frac{t \cdot p(-t-1)+p(0)}{t+1} =12t^2+4t+5.$$ 
\end{itemize} This gives $\chi(V)=5,$ $\chi(V\cap L_1)=(-1)^1\cdot 4=-4,$ and $\chi(V\cap L_1 \cap L_2)=(-1)^2\cdot 12=12$ where $L_1$ and $L_2$ are general hyperplanes in $\pp^4$. \label{ex:AluffiInvolution}
\end{example}

\section{Background and Review} \label{section:Review}
As in the previous section we consider possibly singular closed subschemes, $V$, of the projective space $\pp^n$ over $k$, an algebraically closed field of characteristic zero. 

We review the definition of the projective degrees of a rational map in \S\ref{subsection:projdeg}. In \S\ref{subsection:segre} we first review a result of Aluffi \cite{aluffi2003computing} which gives an explicit expression for the Segre class $s(V,\pp^n)$ in terms of the projective degrees in Proposition \ref{propn:aluffi_segre}. We then discuss previous algorithms to compute the Segre class $s(V,\pp^n)$. 

In \S\ref{subsection:csm} we review a result of Aluffi \cite{aluffi1999chern} which allows one to compute the Chern-Schwartz-MacPherson class of a hypersurface by computing certain Segre classes, stated in Proposition \ref{propn:allufi_csm_hyper}. In Proposition \ref{propn:csm_higher_codim} we state a general version of the inclusion/exclusion property of $c_{SM}$ classes which will allow for the computation of the $c_{SM}$ class in codimension greater than one. We also discuss previous algorithms which use the result stated in Proposition \ref{propn:allufi_csm_hyper} to calculate $c_{SM}$ classes. Finally we give a result of Aluffi \cite{aluffi2003computing} which gives an expression for the $c_{SM}$ class of a hypersurface in terms of the projective degrees of a certain rational map in Theorem \ref{Csm_computing _thorem}.  
\subsection{Projective Degrees} \label{subsection:projdeg}
  Here we recall the definition of the projective degrees of a rational map as in Harris \cite{harris1992algebraic}; the computation of these projective degrees will allow for the calculation of Segre and $c_{SM}$ classes using Algorithms \ref{algorithm:csm_polar}, \ref{algorithm:SegreAlg} and \ref{algorithm:CSMAlg}. 

Consider a rational map $\phi : \pp^n \dashrightarrow \pp^m$. In the manner of  Harris (Example 19.4 of \cite{harris1992algebraic}) we may define the \textit{projective degrees} of the map $\phi$ as a list of integers $(g_0, \dots , g_n)$ where \begin{equation}
g_i=\mathrm{card} \left(\phi^{-1} \left( \pp^{ m-i} \right) \cap \pp^{i} \right). \label{eq:harris_proj_deg_def}
\end{equation} Here $\pp^{ m-i} \subset  \pp^m$ and $\pp^{i} \subset \pp^n$ are general hyperplanes of dimension $m-i$ and $i$ respectively and $\mathrm{card}$ is the cardinality of a zero dimensional set. Note that points in $\left(\phi^{-1} \left( \pp^{ m-i} \right) \cap \pp^{i} \right)$ will have multiplicity one (this follows from the Bertini theorem of Sommese and Wampler \cite[\S A.8.7]{sommese2005numerical}). Let $\Gamma_{\phi} \subset \pp^n \times \pp^m$ be the graph of $\phi$. The numbers $g_i$ are also used by Aluffi \cite{aluffi1999chern,aluffi2003computing,aluffi2013euler}, where the class $[\Gamma_{\phi}]$ is pushed forward to a class $[G]\in A_*(\pp^n)$ by the projection map onto the first factor of $\pp^n\times \pp^m$. Aluffi refers to the class $[G]$ as the class of the shadow of the graph of the map $\phi$.  Specifically, take $t$ to be the pull-back of the hyperplane class from the $\pp^m$ factor of $\pp^n\times \pp^m$ and let $\pi: \Gamma_{\phi} \to \pp^n$ be the projection. In the notation of \cite{aluffi2003computing}, the \textit{shadow} of $\Gamma_{\phi}$ is the class \begin{equation}
[G]=g_0+g_{1}h+ \cdots + g_nh^n\in A_* (\pp^n),  \label{eq:GammaDef}
\end{equation} where $g_i=\deg(\pi_*(t^i \cdot [\Gamma_{\phi}]))$, these $(g_0, \dots , g_n)$ are also the projective degrees of the map $\phi$.

We give a method to compute the projective degrees $g_i$ in Theorem \ref{projective_deg_theorem} below. 

\subsection{Segre classes} \label{subsection:segre}
In this subsection we state a result of Aluffi \cite{aluffi1999chern} (Proposition \ref{propn:aluffi_segre}) which can be used to calculate Segre classes of projective varieties. When combined with result of Theorem \ref{projective_deg_theorem} this yields our algorithm to compute Segre classes of projective varieties described in Algorithm \ref{algorithm:SegreAlg}. We also review several previous results on the computation of Segre classes, the first due to Aluffi \cite{aluffi1999chern} and the second due to Eklund, Jost and Peterson \cite{Jost}. 

In (\ref{eq:degR_eq_gi}) we make explicit the relationship between the projective degrees of a rational map and the degrees of the residual set considered in \cite{Jost}. 

Aluffi \cite{aluffi2003computing} gives the following result which allows for the computation of the Segre class of $V$ in $\pp^n$ for $V$ a subscheme of $\pp^n$.
\begin{propn}[Proposition 3.1 \cite{aluffi2003computing}] Let $I=(f_0, \dots , f_m) \subset k[x_0, \dots , x_n] $ be a homogeneous ideal defining a scheme $V\subset \pp^n$ and let $h=c_1\left( \oo_{\pp^n}(1) \right)$ be the class of a hyperplane in $A_*(\pp^n)$. Since $I$ is homogeneous we may assume that the $\deg (f_i) =d$ for all $i$. Let $\phi: \pp^n \dashrightarrow \pp^m $ be the rational map specified by  $$
p \mapsto (f_0(p):\cdots: f_m(p)),$$ let $(g_0,\dots,g_n)$ be the projective degrees of $\phi$ and let $\Gamma_{\phi} \subset \pp^n \times \pp^m$ be the graph of $\phi$. Write $[G]$ for the class of the shadow of the graph of the map $\phi$ (see \eqref{eq:GammaDef}), i.e. $$[G] =g_0+g_1h+ \cdots +g_{n-1}h^{n-1}+g_nh^n $$ in $A_*(\pp^n)\cong \ZZ[h]/(h^{n+1})$. Then we have: \begin{equation}
s(V,\pp^n) = 1 - c(\oo(dh))^{-1} \cap \left(\sum_{i=0}^n \frac{g_ih^i}{c(\oo(dh))^i)} \right) \; \in A_*(\pp^n). 
\end{equation} \label{propn:aluffi_segre} \end{propn}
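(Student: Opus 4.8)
\emph{Proof proposal.} The plan is to identify the closure $\Gamma_{\phi}$ of the graph of $\phi$ with the blow-up of $\pp^n$ along $V$, transport the line bundle $\oo_{\pp^m}(1)$ to the blow-up, and then extract $s(V,\pp^n)$ from the exceptional divisor by a formal computation in $A_*(\pp^n)\cong\ZZ[h]/(h^{n+1})$.

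First I would recall that, since the $f_i$ all have degree $d$, they generate the twisted ideal sheaf $\mathcal{I}(d)$ of $V$, and hence their degree $k$ products generate $\mathcal{I}^k(kd)$ for every $k$; thus the closure of the graph of the rational map they define is $\Proj\bigl(\bigoplus_{k\ge 0}\mathcal{I}^k(kd)\bigr)$. Twisting degreewise by $\oo(d)$ does not change $\Proj$, so $\Gamma_{\phi}$ is canonically isomorphic to the blow-up $\pi\colon Bl_V\pp^n\to\pp^n$, with $\pi$ identified with the projection onto the first factor of $\pp^n\times\pp^m$. Let $E=\tilde V$ be the exceptional divisor and $H=\pi^*h$. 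Under this identification, the projection to $\pp^m$ is the morphism $\tilde\phi$ resolving $\phi$, defined by the base-point-free linear system spanned by $\pi^*f_0,\dots,\pi^*f_m$; these are sections of $\pi^*\oo_{\pp^n}(d)\otimes\oo(-E)$ because the inverse image ideal sheaf of $\mathcal{I}$ on the blow-up is $\oo(-E)$. Therefore $\tilde\phi^*\oo_{\pp^m}(1)=\pi^*\oo_{\pp^n}(d)\otimes\oo(-E)$, and writing $t=c_1(\tilde\phi^*\oo_{\pp^m}(1))$ — which is precisely the pull-back of the hyperplane class from the $\pp^m$ factor used in \eqref{eq:GammaDef} to define the $g_i$ — one obtains the crucial relation $[E]=dH-t$ in $A_*(\Gamma_{\phi})$.

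Next I would invoke the blow-up description of the Segre class (Fulton \cite[\S4.2.2]{fulton}): pushing forward to $\pp^n$,
\[
s(V,\pp^n)=\pi_*\!\left(\left(\sum_{j\ge1}(-1)^{j-1}[E]^j\right)\cap[\Gamma_{\phi}]\right)=\pi_*\!\left(\frac{[E]}{1+[E]}\cap[\Gamma_{\phi}]\right).
\]
Substituting $[E]=dH-t$ and using $\frac{u}{1+u}=1-\sum_{i\ge0}\frac{t^i}{(1+dH)^{i+1}}$ with $u=dH-t$, the computation reduces to pushing forward the monomials $H^a t^b\cap[\Gamma_{\phi}]$. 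By the projection formula together with the defining property $\pi_*(t^i\cap[\Gamma_{\phi}])=g_ih^i$ of the projective degrees, and $\pi_*[\Gamma_{\phi}]=[\pp^n]$, one gets $\pi_*(H^a t^b\cap[\Gamma_{\phi}])=g_b h^{a+b}$. Expanding each $\frac{1}{(1+dH)^{i+1}}$ as a binomial series, collecting terms, and using $c(\oo(dh))=1+dh$ then yields
\[
s(V,\pp^n)=1-\sum_{i=0}^n\frac{g_ih^i}{(1+dh)^{i+1}}=1-c(\oo(dh))^{-1}\cap\left(\sum_{i=0}^n\frac{g_ih^i}{c(\oo(dh))^i}\right),
\]
which is the asserted formula.

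I expect the only genuinely delicate point to be the first step: checking that the closure of the graph really is the blow-up of $\pp^n$ along the scheme $V$ itself and that $\oo(-E)$ is the inverse image ideal sheaf, especially when $I$ is not saturated. This is handled by the observation that the sheafification of $I$ depends on $I$ only up to saturation and equals the ideal sheaf $\mathcal{I}$ of the closed subscheme $V\subset\pp^n$, so the blow-up of $\mathcal{I}$ is indeed $Bl_V\pp^n$ and carries the exceptional divisor appearing in the definition of $s(V,\pp^n)$. Once the identification and the line-bundle computation $[E]=dH-t$ are in place, the rest is a routine power-series manipulation in $\ZZ[h]/(h^{n+1})$.
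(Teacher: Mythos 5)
Your argument is correct, and it is essentially the original proof of the cited result: the paper itself gives no proof of this proposition (it is quoted from Aluffi \cite{aluffi2003computing}), and your route --- identifying $\Gamma_{\phi}$ with $Bl_V\pp^n$, noting $t=c_1(\tilde\phi^*\oo_{\pp^m}(1))=dH-[E]$, and pushing forward $\frac{[E]}{1+[E]}\cap[\Gamma_{\phi}]$ via the projection formula and $\pi_*(t^i\cap[\Gamma_{\phi}])=g_ih^i$ --- is exactly the computation behind it, recovering \eqref{eq:Segre_gs}. Your closing remark on saturation (the graph closure depends only on the ideal sheaf $\tilde I$, so the blow-up is indeed along the scheme $V$) correctly disposes of the one delicate point.
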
 To use Proposition \ref{propn:aluffi_segre}, Aluffi \cite{aluffi2003computing} notes that $\Gamma_{\phi}$ can be obtained explicitly as $\Gamma_{\phi}$ is isomorphic to the blow-up of $\pp^n$ along $V$, and once $\Gamma_{\phi}$ is known the class $[G]$ can be computed directly.  Specifically the algorithm of Aluffi is as follows, \begin{itemize}
\item obtain $\Gamma_{\phi}$ explicitly (by computing $Bl_{V}\pp^n \cong \Gamma_{\phi}$, that is the blow-up of $\pp^n$ along $V$)
\item intersect $\Gamma_{\phi}$ with general hyperplanes 
\item project the intersections down to $\pp^n$, and compute the degree of the projections to obtain the class of the shadow of the graph, $[G]$.
\end{itemize} Hence the main computational cost for finding Segre classes using the method of \cite{aluffi2003computing} is that of finding the blow-up of $\pp^n$ along $V$. 

Another method for computing Segre classes was given by Eklund, Jost and Peterson \cite{Jost}. This method does not use the relation between the class of the shadow of the graph $[G]$ (see (\ref{eq:GammaDef})) and the Segre class $s(V,\pp^n)$; we summarize the result in Proposition \ref{propn:JostSegre} below. 
\begin{propn}[Theorem 3.2 \cite{Jost}]
Let $V \subset \pp^n$ be a subscheme of dimension $\varrho$ defined by a non-zero homogeneous ideal $I=(f_0,\dots, f_m) \subset k[x_0, \dots , x_n]$ with the generators $f_i$ having degree $d$.  Let $$s(V,\pp^n)=s_n+ \cdots + s_0 h^n \in A_*(\pp^n)\cong \ZZ[h]/(h^{n+1})$$ be the Segre class of $V$ in $\pp^n$. For $n-\varrho \leq j \leq n$ and general elements $\gamma_1, \dots , \gamma_j$ let $J=(\gamma_1, \dots , \gamma_j) $ and let $R_j \subset \pp^n$ be the subscheme defined by $J:I^{\infty  }$. Then we have $$
d^j= \deg(R_j)+\sum_{i=0}^{j-(n-\varrho)} {j \choose j-(n-\varrho)-i} d^{j-(n-\varrho)-i}s_i .
$$
\label{propn:JostSegre}

\end{propn}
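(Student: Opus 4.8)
The plan is to obtain the identity from a residual-intersection computation carried out on the blow-up $\pi\colon Bl_V\pp^n\to\pp^n$ along $V$, with exceptional divisor $E$. Since $I$ is generated in degree $d$, the ideal $J$ in the statement may be taken to be generated by $j$ general elements $\gamma_1,\dots,\gamma_j$ of the degree-$d$ graded piece $I_d$; each such $\gamma_i$ cuts out a hypersurface $D_i=V(\gamma_i)\subset\pp^n$ of degree $d$ containing $V$. Because $I\cdot\oo_{Bl_V\pp^n}=\oo(-E)$ and $\gamma_i$ is general, the pulled-back form $\pi^\ast\gamma_i$ vanishes to order exactly one along each component of $E$, so if $\tilde D_i$ denotes the strict transform of $D_i$ one gets the divisor-class identity $[\tilde D_i]=\pi^\ast(dh)-E$ for every $i$. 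This is the single geometric input we will feed into intersection theory.

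First I would establish that, for $n-\varrho\le j\le n$ and general $\gamma_1,\dots,\gamma_j$, the strict transforms meet properly. The linear system $\{\tilde D_\gamma:\gamma\in I_d\}$ on $Bl_V\pp^n$ has base locus contained in $E$, since its base locus downstairs is $V(I_d)=V$; hence by the Bertini theorem cited in \cite[\S A.8.7]{sommese2005numerical} the intersection $\tilde D_{\gamma_1}\cap\cdots\cap\tilde D_{\gamma_j}$ is reduced of pure dimension $n-j$, has no component inside $E$, and is mapped birationally by $\pi$ onto a subscheme of $\pp^n$ whose cycle is $[R_j]$, where $R_j=V(J:I^\infty)$. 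Thus $\tilde D_{\gamma_1}\cdots\tilde D_{\gamma_j}=[\widetilde{R_j}]$ with $\pi_\ast[\widetilde{R_j}]=[R_j]$, and substituting $[\tilde D_i]=\pi^\ast(dh)-E$ gives the class identity $\deg(R_j)\,h^j=\pi_\ast\bigl((\pi^\ast(dh)-E)^j\bigr)$ in $A_\ast(\pp^n)$.

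The last step is to expand the right-hand side by the binomial theorem and push forward term by term. By the projection formula the $m$-th term reduces to $(-1)^m\binom{j}{m}d^{j-m}h^{j-m}\cdot\pi_\ast(E^m)$, and the blow-up definition of the Segre class supplies $\pi_\ast(E^0)=1$ together with $\pi_\ast(E^m)=(-1)^{m-1}\,(\text{the dimension-}(n-m)\text{ component of }s(V,\pp^n))$ for $m\ge1$, the alternating sign coming from $N_{E/Bl_V\pp^n}=\oo_E(-1)$. Extracting the coefficient of $h^j$, and using that $s(V,\pp^n)$ has no component of codimension less than $n-\varrho$, turns the class identity into $d^j=\deg(R_j)+\sum_{m}\binom{j}{m}d^{j-m}\sigma_m$ with $\sigma_m$ the $h^m$-coefficient of $s(V,\pp^n)$; re-indexing $\sigma_m$ against the $s_i$ of the statement then yields the asserted formula. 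This last part is pure binomial bookkeeping.

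I expect the genuine difficulty to lie in the transversality and scheme-structure claims of the middle paragraph. One must be sure that, for general $\gamma_i$, the saturation $J:I^\infty$ defines a scheme that is equidimensional of dimension $n-j$ with no embedded or lower-dimensional components that would distort $[R_j]$, that each $\tilde D_i$ occurs with multiplicity one in the intersection, and that no component of the intersection escapes into the exceptional locus $E$; when $V$ is singular or non-reduced these do not follow from a Bertini statement on $\pp^n$ alone and should be verified after blowing up. Everything else --- the identity $[\tilde D_i]=\pi^\ast(dh)-E$, the pushforward formula for powers of $E$, and the closing binomial identity --- is formal. Equivalently, the entire argument can be packaged as an instance of Fulton's residual-intersection theorem \cite[Ch.~9]{fulton}, which records exactly the decomposition $d^j h^j=\{c(\oo(d)^{\oplus j})\cap s(V,\pp^n)\}_{n-j}+[R_j]$; the blow-up computation sketched above is, in effect, its proof in this special case.
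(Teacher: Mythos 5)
The paper itself does not prove this proposition: it is quoted as Theorem 3.2 of Eklund--Jost--Peterson \cite{Jost}, so there is no internal proof to compare against. Your sketch is essentially the original argument of \cite{Jost} (equivalently, a special case of Fulton's residual intersection formula \cite{fulton}): work on $Bl_V\pp^n$, use $[\tilde D_i]=\pi^*(dh)-E$, invoke Bertini to get a proper, generically transverse intersection of the strict transforms with no component inside $E$, push forward, and convert $\pi_*(E^m)$ into Segre class terms. The outline is sound, and you have correctly located the genuine work in the transversality and scheme-structure claims. One simplification: because $I$ is generated in degree $d$, the pullbacks of the generators generate $\pi^*\oo_{\pp^n}(d)\otimes\oo(-E)$ (this is exactly $I\cdot\oo_{Bl_V\pp^n}=\oo(-E)$), so the strict-transform system is base-point \emph{free} on the blow-up, not merely with base locus inside $E$; base-point-freeness is what cleanly yields the dimension count inside $E$ (and inside the singular locus of the blow-up, which you should also exclude) and multiplicity one along every component, even when $V$ is singular or non-reduced.

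One bookkeeping caution before you call the last step ``pure binomial bookkeeping.'' Your identity $d^j=\deg(R_j)+\sum_{m}\binom{j}{m}d^{j-m}\sigma_m$, with $\sigma_m$ the $h^m$-coefficient of $s(V,\pp^n)$ and $m$ running from $n-\varrho$ to $j$, is correct, but it reproduces the stated formula only under the top-down indexing $s_i=\sigma_{(n-\varrho)+i}$, i.e.\ $s_0$ is the top-dimensional (codimension $n-\varrho$) piece of the Segre class. That is the convention the paper actually uses afterwards (it is what makes $\deg(R_j)=g_j$ in \eqref{eq:degR_eq_gi} come out; e.g.\ in Example \ref{example:alg_ex}, $d=4$, $s(V,\pp^4)=16h^2-128h^3+768h^4$, and for $j=3$ one gets $\binom{3}{1}4\cdot 16+\binom{3}{0}(-128)=64=4^3-g_3$), even though the displayed expansion $s_n+\cdots+s_0h^n$ literally pairs $s_0$ with $h^n$; with that literal reading your final re-indexing would not land on the asserted identity. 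So state the indexing convention explicitly when you close the argument.
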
 To apply Proposition \ref{propn:JostSegre} to compute $s(V,\pp^n)$, Eklund, Jost and Peterson \cite{Jost} use the following method.
\begin{itemize}
\item $V=V(I)$, say $d$ is the degree of the homogeneous generators of $I$.
\item Pick general degree $d$ polynomials $\omega_1, \dots , \omega_j$ in $I$.
\item For $j=n-\dim V=\codim(V)$ to $j=n$ do:
\begin{itemize}
\item Set $J=(\omega_1,\dots, \omega_j)$ and let $R_j$ be the scheme defined by $J:I^{\infty}$.
\item Compute $\deg(R_j)$.
\item Set $p=j-\codim(V)$, \begin{equation}
s_p=d^j-\deg(R_j)-\sum_{i=1}^{p-1} { j \choose p-i} d^{p-i}s_i. \label{eq:JostSegre}
\end{equation}

\end{itemize}
\end{itemize}
Hence the main computational cost in the algorithm of Eklund, Jost and Peterson \cite{Jost} is the computation of $\deg(R_j)$. When done symbolically, this means the main cost arises from the computation of the saturation $J:I^{\infty}$ for each $j$. Eklund, Jost and Peterson \cite{Jost} also explain that $\deg( R_j)$ can be computed numerically using homotopy continuation in Bertini \cite{Bertini}.

There is, in fact, an explicit relationship between the projective degrees $(g_0,\dots,g_n)$ of a rational map $\phi$ defined by an ideal $I$ (or equivalently the class $[G]$ of the shadow of the graph $\Gamma_{\phi}$ (\ref{eq:GammaDef})) and the degrees of the residual sets $R_j$ in Proposition \ref{propn:JostSegre}. Specifically let $V=V(I)$ be a subscheme of $\pp^n$ where $I=(f_0,\dots,f_m)$ is a homogenous ideal in $k[x_0,\dots,x_n]$ and let $[G]=g_0+g_1h+ \cdots +g_{n-1}h^{n-1}+g_nh^n  \in A_*(\pp^n )$ be the class of the shadow of the graph of $\phi$ (as in Proposition \ref{propn:aluffi_segre}). Since $I$ is homogenous we may assume that $\deg(f_j)=d$ for all $ i=1,\dots,m$. Take $\nu=\codim(Y)$. Let $$s(V,\pp^n)= s_n+\cdots+s_0h^n \in A_*(\pp^n)$$ be the Segre class of $V$ in $\pp^n$ and let $ \tilde{s}_0=1$, $ \tilde{s}_1= \cdots =  \tilde{s}_{\nu-1}=0$ and $\tilde{s_i}=-  s_{i-\nu} $ for $i\geq \nu$. Note that $s_n=\cdots = s_{\nu+1}=0$, i.e. $s_{\nu}$ is the first nonzero coefficent. In \cite{jost2013algorithm} Jost gives the following expression relating the $g_j$ in the class of the graph $\Gamma_I$ to the Segre class,  \begin{equation}
g_j=\sum_{i=0}^j {j \choose i} d^{j-i}\tilde{s_i}, \label{eq:gfirst}
\end{equation} which is obtained by rearranging and simplifying the expression of Proposition \ref{propn:aluffi_segre}. The result of Proposition \ref{propn:JostSegre} gives the following expression for $\deg(R_j)$ when $j=\nu, \dots, n$, \begin{equation}
 \deg(R_j)=d^j-\sum_{i=0}^{j-(n-\nu)} {j \choose j-(n-\nu)-i} d^{j-(n-\nu)-i}s_i. \label{eq:degR1}
\end{equation}Reindexing the summation in (\ref{eq:degR1}) we have $$
 \deg(R_j)=d^j-\sum_{i=\nu}^{j} {j \choose i} d^{j-i}s_{i-\nu}, \;\;\; \mathrm{for \; }j=\nu,\dots, n.
$$ Since $ \tilde{s}_0=1$ and $ \tilde{s}_1= \cdots =  \tilde{s}_{\nu-1}=0$ we may rewrite the expression (\ref{eq:gfirst}) for $g_j$ as $$
g_j=d^j -\sum_{i=\nu}^j {j \choose i} d^{j-i}{s_{i-\nu}}, \;\;\; \mathrm{for \; }j=\nu,\dots, n, 
$$ and $g_j=d^j$ for $j=0, \dots, \nu -1$. Hence we have that \begin{equation}
\deg(R_j)=g_j \; \mathrm{for} \; j=\nu,\dots, n.\label{eq:degR_eq_gi}
\end{equation}

In light of (\ref{eq:degR_eq_gi}) we observe that the method for computing Segre classes of Eklund, Jost and Peterson \cite{Jost} stated in Proposition \ref{propn:JostSegre} computes the same values as the result of Theorem \ref{projective_deg_theorem}, and in fact, the method of Theorem \ref{projective_deg_theorem} can be seen as a refinement of the method of \cite{Jost}. In both cases similar systems of equations are considered, however we will see below that the method of Algorithm \ref{algorithm:csm_polar} tends to perform better. 

\subsection{Chern-Schwartz-MacPherson Classes} \label{subsection:csm}
In this subsection we review previous results on the calculation of the $c_{SM}$ class of a projective variety due to Aluffi \cite{aluffi1999chern,aluffi2003computing} and Jost \cite{jost2013algorithm}. We then state Theorem \ref{Csm_computing _thorem}, a result of Aluffi \cite{aluffi2003computing}, which when combined with Corollary \ref{corr:polar_deg_csm} below allows for the computation of the Chern-Schwartz-MacPherson class of a projective variety in the manner described in Algorithm \ref{algorithm:CSMAlg}.     

A tangible realization of the $c_{SM}$ classes, in the case of hypersurfaces, was given by Aluffi in Theorem I.4 of \cite{aluffi1999chern}. We state the result in the following proposition. \begin{propn}[Theorem I.4 \cite{aluffi1999chern}] Let $V=V(f)$ be a hypersurface  of $\pp^n$, for some $f\in k[x_0,\dots, x_n]$, and asume without loss of gernality that $f$ is squarefree (since $c_{SM}(V)=c_{SM}(V_{red})$) then \begin{equation}
c_{SM}(V)=c(T\pp^n) \cap \left( s(V,\pp^n)+ \sum_{m=0}^n \sum_{j=0}^{n-m} {n-m \choose j}(-V)^j \cdot (-1)^{n-m-j}s_{m+j}(Y, \pp^n)\right) \label{eq:allufi_csm_hyper}
\end{equation} where $s(V,\pp^n)$ is the Segre class of $V$ in $\pp^n$, and $Y$ is the singularity subscheme of $V$. That is, $Y$ is the scheme defined by the vanishing of the partial derivatives of $f$.\label{propn:allufi_csm_hyper}
\end{propn}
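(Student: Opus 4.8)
The plan is to follow Aluffi's strategy from \cite{aluffi1999chern}, organising the argument around the ``defect'' of the Chern--Fulton class. Put $d=\deg f$ and write $c_{F}(V):=c(T\pp^n)\cap s(V,\pp^n)$; since $V$ is a Cartier divisor, $s(V,\pp^n)=c(\oo(dh))^{-1}\cap[V]$, so $c_F(V)=c(T\pp^n)\,c(\oo(dh))^{-1}\cap[V]$ is the usual virtual (Chern--Fulton) class of the hypersurface. The proposition is then equivalent to the identity
\[
c_{SM}(V)-c_F(V)\;=\;c(T\pp^n)\cap\mathcal{D}(Y),\qquad
\mathcal{D}(Y):=\sum_{m=0}^n\sum_{j=0}^{n-m}\binom{n-m}{j}(-V)^j(-1)^{n-m-j}s_{m+j}(Y,\pp^n).
\]
A first, purely formal step is to expand $c(\oo(dh))^{-1}=(1+dh)^{-1}$ and collect powers of $dh$, verifying that $\mathcal{D}(Y)=c(\oo(dh))^{-1}\cap\bigl(s(Y,\pp^n)^{\vee}\otimes_{\pp^n}\oo(dh)\bigr)$, where ${}^{\vee}$ and $\otimes$ are the duality and twist operations on Chow classes used in \cite{aluffi2003computing}; the binomial coefficients arise from expanding the powers of $c(\oo(dh))$ inside $\otimes$, and the signs $(-1)^{n-m-j}$ from ${}^{\vee}$. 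So the content is the ``Milnor class'' identity $c_{SM}(V)-c_F(V)=c(T\pp^n)\cap\bigl(c(\oo(dh))^{-1}\cap(s(Y,\pp^n)^{\vee}\otimes\oo(dh))\bigr)$.

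To prove that identity I would resolve the singularities of $V$. Take $\pi:\widetilde{\pp^n}\to\pp^n$ a composition of blow-ups --- the first along the singularity subscheme $Y$, the rest along smooth centres lying over $Y$ --- arranged so that $\pi^{-1}(V)_{\mathrm{red}}$ is a simple normal crossings divisor with reduced exceptional divisor $E=\bigcup_\ell E_\ell$. Two ingredients then drive the computation. First, naturality of $c_{SM}$ for $\pi$ gives $\pi_{*}c_{SM}(\mathfrak{1}_{\pi^{-1}V})=c_{SM}\bigl(\mathcal{C}(\pi)(\mathfrak{1}_{\pi^{-1}V})\bigr)$, and since $\pi$ is an isomorphism over $\pp^n\setminus Y$ while $Y\subseteq V$, one has $\mathcal{C}(\pi)(\mathfrak{1}_{\pi^{-1}V})=\mathfrak{1}_V+\delta$ with $\delta$ supported on $Y$ and equal there to the Euler characteristics of the fibres of $\pi$. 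Second, because $\pi^{-1}(V)_{\mathrm{red}}$ has normal crossings, $c_{SM}(\mathfrak{1}_{\pi^{-1}V})$ is computed on $\widetilde{\pp^n}$ by inclusion--exclusion over the strata of the crossing --- each stratum smooth, so each term an ordinary total Chern class capped with a fundamental class --- giving a closed expression in $c(T\widetilde{\pp^n})$ and the classes of the components. Combining these, pushing forward to $\pp^n$ with the projection formula, and using that $\pi$ factors through $\mathrm{Bl}_Y\pp^n$ so that all exceptional pushforwards are governed by $s(Y,\pp^n)$ via functoriality of Segre classes, the non-smooth contributions assemble into $c(T\pp^n)\cap\mathcal{D}(Y)$; the bundles $\oo(E_\ell)$ and $\oo(dh)$ carried through the projection formula are exactly what produce the $\otimes\,\oo(dh)$ twist.

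The main obstacle is this last assembly: the accumulated exceptional terms come from a resolution that is highly non-unique, and one must show they collapse to the single closed-form class $c(T\pp^n)\cap\mathcal{D}(Y)$. Resolution-independence is in principle forced by functoriality of $c_{SM}$, but actually extracting the closed form requires either an induction on the number of blow-ups --- tracking at each stage how $s(Y,\pp^n)$, the new exceptional line bundle, and the $({}^{\vee},\otimes)$ operations interact --- or Aluffi's more economical route via the $\mu$-class $\mu_{\oo(dh)}(Y)=c(T^{*}\pp^n\otimes\oo(dh))\cap s(Y,\pp^n)$, whose duality properties package the identity so that only one blow-up computation is needed. A conceptually different proof I would keep in reserve replaces resolution by Verdier's specialization property of $c_{SM}$: realising $\mathfrak{1}_V$ as the specialization of $\mathfrak{1}_{\pp^n}$ along $f$ makes $c_{SM}(V)$ the specialization of $c(T\pp^n)$ corrected by the Chern class of the vanishing-cycles function of $f$, which is supported on $Y$; matching that correction with $c(T\pp^n)\cap\mathcal{D}(Y)$ again comes down to the same Segre-class computation on $Y$, so the analytic input changes but the combinatorial heart is unchanged.
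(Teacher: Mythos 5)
The paper itself offers no proof of this proposition: it is quoted as Theorem I.4 of \cite{aluffi1999chern} and used as a black box, so there is no internal argument to compare yours against. Measured against Aluffi's original proof, your outline is essentially the same route. Your formal reduction is correct: expanding $(1+dh)^{-1}$ and collecting dimensions does show that the double sum in \eqref{eq:allufi_csm_hyper} is exactly $c(\oo(dh))^{-1}\cap\bigl(s(Y,\pp^n)^{\vee}\otimes_{\pp^n}\oo(dh)\bigr)$ (the dimension-$m$ piece of $(-1)^{n-k}s_k(1+dh)^{-(n-k+1)}$ with $k=m+j$ produces precisely $\binom{n-m}{j}(-V)^j(-1)^{n-m-j}s_{m+j}$), and $c(T\pp^n)\cap s(V,\pp^n)$ is indeed the Chern--Fulton class of the divisor, so the statement is equivalent to Aluffi's Milnor-class identity as you say.

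The caveat is that, as a self-contained proof, your sketch stops exactly where the real work in \cite{aluffi1999chern} begins: the assembly of the exceptional contributions from the resolution into the single closed class $c(T\pp^n)\cap\bigl(c(\oo(dh))^{-1}\cap(s(Y,\pp^n)^{\vee}\otimes\oo(dh))\bigr)$. Naming the two available mechanisms (induction over the blow-ups tracking how ${}^{\vee}$ and $\otimes$ interact with the new exceptional bundles, or the $\mu$-class $\mu_{\oo(dh)}(Y)=c(T^{*}\pp^n\otimes\oo(dh))\cap s(Y,\pp^n)$ and its behaviour under a single blow-up along $Y$) is not the same as carrying either out; in particular the key lemma comparing $s(Y,\pp^n)$ with the Segre class of the singularity subscheme of the pulled-back hypersurface upstairs, which is what makes the twist by $\oo(dh)$ come out right, is nontrivial and is the heart of Aluffi's argument. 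Likewise the Verdier-specialization alternative is a genuine second proof in the literature, but again only gestured at here. So: correct reduction, correct identification of the machinery, but the central computation is deferred to the reference rather than proved --- which is acceptable only because the paper itself treats the result as a citation.
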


Note that the inclusion/exclusion relation for $c_{SM}$ classes will allow us to reduce all computation of $c_{SM}$ classes to the case of hypersurfaces. In the case of subvarieties of $\pp^n$ we have the following proposition, discussed informally by Aluffi \cite{aluffi2003computing}; Proposition \ref{propn:csm_higher_codim} follows directly from (\ref{eq:csm_inclusion_exclusion}).

\begin{propn}
Let $   V=X_1 \cap \cdots \cap X_{r} = V(f_1) \cap \cdots \cap V(f_r)$ be a subscheme of $\pp^n = \Proj(k[x_0, \dots , x_n])$. Write the polynomials defining $V$ as $F=(f_1,\dots , f_r)$ and let $F_{ \left\lbrace S \right\rbrace } = \prod_{i \in S} f_i $ for $S \subset \left\lbrace 1, \dots , r\right\rbrace$ . Then, $$
c_{SM}(V)= \sum_{S \subset \left\lbrace 1, \dots , r\right\rbrace} (-1)^{|S|+1}c_{SM} \left(V( F_{\left\lbrace S \right\rbrace } )\right)
$$  where $|S|$ denotes the cardinality of the integer set $S$. \label{propn:csm_higher_codim}
\end{propn}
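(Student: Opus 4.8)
The plan is to derive this from the inclusion/exclusion relation \eqref{eq:csm_inclusion_exclusion} for $c_{SM}$ classes by induction on $r$, the number of hypersurfaces cutting out $V$. The base case $r=1$ is trivial, since the only nonempty subset of $\{1\}$ is $\{1\}$ itself, giving $c_{SM}(V)=c_{SM}(V(f_1))$. For the inductive step, I would write $V=X_1\cap\cdots\cap X_r$ as $(X_1\cap\cdots\cap X_{r-1})\cap X_r$, set $W=X_1\cap\cdots\cap X_{r-1}$, and apply \eqref{eq:csm_inclusion_exclusion} with $V_1=W$ and $V_2=X_r$ to get
\begin{equation*}
c_{SM}(W\cap X_r)=c_{SM}(W)+c_{SM}(X_r)-c_{SM}(W\cup X_r).
\end{equation*}

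Next I would observe that $W\cup X_r=(X_1\cup X_r)\cap\cdots\cap(X_{r-1}\cup X_r)$ by distributivity of union over intersection; scheme-theoretically this is the statement that $(f_1,\dots,f_{r-1})\cap(f_r)$ has the same radical and, more to the point, that the associated reduced schemes agree, which suffices since $c_{SM}$ depends only on the support. Moreover each $X_i\cup X_r$ is the hypersurface $V(f_if_r)$. Thus both $c_{SM}(W)$ and $c_{SM}(W\cup X_r)$ are $c_{SM}$ classes of schemes cut out by $r-1$ hypersurfaces — namely by $f_1,\dots,f_{r-1}$ and by $f_1f_r,\dots,f_{r-1}f_r$ respectively — so the induction hypothesis applies to each. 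Substituting the two resulting sums over subsets $S\subset\{1,\dots,r-1\}$, and using that $F_{\{S\cup\{r\}\}}=\prod_{i\in S}(f_if_r)\cdot$ (the telescoping of the extra $f_r$ factors matches up because $|S\cup\{r\}|=|S|+1$, flipping the sign), one checks that the $S$-term from $c_{SM}(W)$ and the $(S\cup\{r\})$-term assembled from $-c_{SM}(W\cup X_r)$ combine to give exactly the terms of $\sum_{S\subset\{1,\dots,r\}}(-1)^{|S|+1}c_{SM}(V(F_{\{S\}}))$, with the term $c_{SM}(X_r)$ supplying the singleton $S=\{r\}$.

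The main obstacle I anticipate is the bookkeeping in the last step: correctly matching every subset $S\subset\{1,\dots,r\}$ with a unique source among the three terms of the inclusion/exclusion relation, and verifying the signs. The sets $S$ with $r\notin S$ come from $c_{SM}(W)$; the sets $S$ with $r\in S$ and $|S|\ge 2$ come from $-c_{SM}(W\cup X_r)$ after rewriting $\prod_{i\in S\setminus\{r\}}(f_if_r)$ as $F_{\{S\setminus\{r\}\}}\cdot f_r^{|S|-1}$ and noting $V(F_{\{S\setminus\{r\}\}}f_r^{|S|-1})=V(F_{\{S\}})$ since $c_{SM}$ is insensitive to the nonreduced structure; and $S=\{r\}$ is precisely the extra $c_{SM}(X_r)$ term. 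A secondary subtlety worth a sentence is justifying the distributive identity $W\cup X_r=\bigcap_i(X_i\cup X_r)$ at the level needed: since all $c_{SM}$ classes here are $c_{SM}$ of reduced supports, it is enough that the underlying sets coincide, which is immediate set-theoretically. Once these reductions are in place the verification is a routine reindexing of a double sum, so no heavy computation is needed.
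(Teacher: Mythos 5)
Your proposal is correct and follows essentially the same route as the paper, which simply asserts that the proposition ``follows directly from \eqref{eq:csm_inclusion_exclusion}''; your induction on $r$, together with the observation that $c_{SM}$ depends only on supports so that $W\cup X_r$ may be replaced by the intersection of the hypersurfaces $V(f_if_r)$, is exactly the bookkeeping the paper leaves to the reader.
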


 In \cite{aluffi2003computing}, Aluffi uses Proposition \ref{propn:aluffi_segre} and Proposition \ref{propn:allufi_csm_hyper} to give an algorithm to compute the $c_{SM}$ class of hypersurface in $\pp^n$ (this algorithm can be extended to higher codimension using Proposition \ref{propn:csm_higher_codim}). That is for a hypersurface $V=V(f) $ in $\pp^n$ and  $Y$ the singularity scheme of $V$ (that is the scheme defined by the zeros of the partial derivatives of $f$) the algorithm of Aluffi \cite{aluffi2003computing} computes $s(Y,\pp^n)$ by finding the blow up, as described above (immediately following Proposition \ref{propn:aluffi_segre}), and then applying Proposition \ref{propn:allufi_csm_hyper}. Thus the main computational step of the algorithm is to compute the blow-up of $\pp^n$ along $Y$ for each hypersurface. This can be implemented using any algorithm which computes the Rees algebra of $Y$.

An alternative method for computing $c_{SM}$ classes was given by Jost in \cite{jost2013algorithm}. This method also uses (\ref{eq:allufi_csm_hyper}) to give an expression for the $c_{SM}$ class of a hypersurface, however Jost computes the class $s(Y,\pp^n)$ by applying the method of \cite{Jost} stated in Proposition \ref{propn:JostSegre} to compute Segre classes by calculating the degrees of residual sets.

Let $V$ be a hypersurface of $\pp^n$ defined by the homogeneous polynomial ideal $(f) $ in $k[x_0, \dots , x_n]$, and since $c_{SM}(V)=c_{SM}(V_{red})$ we assume that $f \in k[x_0, \dots , x_n]$ is squarefree. Using the partial derivatives of $f$ we define a rational map $ \varphi : \pp^n  \dashrightarrow \pp^n$,  \begin{equation}
\varphi:p \mapsto \left( \frac{\partial f}{\partial x_0}(p): \cdots : \frac{\partial f}{\partial x_n}(p) \right). \label{eq:polar_map_def}
\end{equation}
This map is referred to as the \textit{polar map} or \textit{gradient map} \cite{dolgachev2000polar}. 

\begin{theorem}[Aluffi \cite{aluffi2003computing} Theorem 2.1] Assume, without loss of generality, that $f\in k[x_0,\dots,x_n]$ is squarefree. Let $V=V(f)$ and let $(g_0,\dots,g_n)$ be the projective degrees of the polar map $\varphi$ (\ref{eq:polar_map_def}), we have the following equality in $A_*(\pp^n)=\ZZ[h]/h^{n+1}$
\begin{equation}
c_{SM}(V)=(1+h)^{n+1}-\sum_{j=0}^{n}g_j (-h)^j(1+h)^{n-j}.
\end{equation} \label{Csm_computing _thorem}
\end{theorem}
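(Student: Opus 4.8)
The plan is to obtain Theorem~\ref{Csm_computing _thorem} by combining the hypersurface formula of Proposition~\ref{propn:allufi_csm_hyper} with the Segre class formula of Proposition~\ref{propn:aluffi_segre}, applied to the singularity subscheme $Y$ of $V=V(f)$ cut out by the partials $\partial f/\partial x_i$. The starting observation is that the polar map $\varphi$ of \eqref{eq:polar_map_def} is exactly the rational map $\pp^n\dashrightarrow\pp^n$ defined by the ideal $Y=(\partial f/\partial x_0,\dots,\partial f/\partial x_n)$, whose $n+1$ generators all have the same degree $d-1$ (here $d=\deg f$). Hence the list $(g_0,\dots,g_n)$ in the statement is precisely the list of projective degrees attached to $Y$, and Proposition~\ref{propn:aluffi_segre} (with $m=n$ and generator degree $d-1$) expresses $s(Y,\pp^n)$ in terms of these $g_j$ and the class $c(\oo((d-1)h))=1+(d-1)h$.

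First I would substitute that expression for $s(Y,\pp^n)$ into \eqref{eq:allufi_csm_hyper}. The inner double sum $\sum_{m}\sum_{j}\binom{n-m}{j}(-V)^j(-1)^{n-m-j}s_{m+j}(Y,\pp^n)$, where $(-V)=-dh$, should be recognized as a binomial convolution: summing first over fixed total degree $\ell=m+j$ collapses it to $\sum_{\ell}s_\ell(Y,\pp^n)\big(\sum_{j}\binom{n-\ell+j}{j}(-d)^j(-1)^{n-\ell-j}\big)h^{\ell}$, and the bracketed coefficient is a standard negative-binomial identity that I expect to simplify to something like $(-1)^{n-\ell}(1-dh)^{-(n-\ell+1)}$ worth of weight — in Aluffi's proof this is what converts $s(Y,\pp^n)$ into $c(\oo(dh))\cdot(\text{shadow-type class})$. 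The net effect (this is essentially the content of \cite[Theorem~I.4]{aluffi1999chern} re-expressed) is that $c_{SM}(V)$ becomes $c(T\pp^n)\cap$ (a rational expression in $h$, $d$, $d-1$, and the $g_j$). Then I would feed in $c(T\pp^n)=(1+h)^{n+1}$ and the Proposition~\ref{propn:aluffi_segre} formula for the $s_\ell(Y,\pp^n)$, clear all denominators of the form $(1+(d-1)h)^{k}$ and $(1-dh)^{k}$ against the numerators coming from $c(T\pp^n)$ and the binomial sums, and verify that everything telescopes to the clean closed form $(1+h)^{n+1}-\sum_{j=0}^n g_j(-h)^j(1+h)^{n-j}$. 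A useful sanity check along the way: the coefficient of $h^0$ must be $1$ (forcing $g_0=1$, consistent with $\varphi$ being dominant) and, setting $f$ smooth so that $Y=\varnothing$ and $g_j=\binom{n+1}{j}\cdot$(appropriate degree factors), the formula must reduce to $c(T\pp^n)\cap\frac{1}{1+dh}\cdot[\text{class of }V]=\frac{(1+h)^{n+1}}{1+dh}\,dh$, the ordinary Chern class of a smooth degree-$d$ hypersurface.

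The main obstacle I anticipate is the bookkeeping in the binomial manipulations: Proposition~\ref{propn:allufi_csm_hyper} involves a double sum twisted by the powers $(-V)^j=(-dh)^j$, and Proposition~\ref{propn:aluffi_segre} already contains a sum with the factor $c(\oo(dh))^{-i}$ in the denominator of the $i$-th term, so after substitution one has a product of two power series in $h$ with differing ``radii'' ($dh$ versus $(d-1)h$) that must be reconciled. Keeping track of which generator degree ($d$ for $V$ itself, $d-1$ for the partials defining $Y$) enters where is the delicate point, and the identity that makes the denominators cancel is the crux — I would isolate and prove that combinatorial identity as a separate lemma before attempting the full substitution, so that the remaining computation is a mechanical, if lengthy, rearrangement in $\ZZ[h]/(h^{n+1})$. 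An alternative, possibly cleaner route is to bypass \eqref{eq:allufi_csm_hyper} and instead use Aluffi's graph/shadow interpretation directly: realize $\Gamma_\varphi$ as (the closure of the graph of) the polar map, note that its shadow class is $\sum g_jh^j$, and invoke the fact from \cite{aluffi2003computing} that $c_{SM}(V)$ equals a universal polynomial transform of this shadow class; Theorem~\ref{Csm_computing _thorem} would then just be the explicit evaluation of that transform, namely $p(h)\mapsto (1+h)^{n+1}-\sum g_j(-h)^j(1+h)^{n-j}$.
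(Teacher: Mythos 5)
Your proposal is correct and follows essentially the same route the paper indicates: the theorem is obtained by substituting the projective-degree expression for the Segre class of the singularity subscheme $Y$ (Proposition~\ref{propn:aluffi_segre}, with generators of degree $d-1$) into the hypersurface formula \eqref{eq:allufi_csm_hyper} and simplifying the resulting binomial sums in $\ZZ[h]/(h^{n+1})$, exactly as in Aluffi's original argument. The only slip is incidental: in your smooth sanity check the projective degrees are $g_j=(d-1)^j$, not binomial-coefficient multiples, though the check still confirms the formula reduces to $(1+h)^{n+1}\,dh/(1+dh)$.
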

Note that Theorem \ref{Csm_computing _thorem} follows from substituting the result of Proposition \ref{propn:aluffi_segre} (as stated in (\ref{eq:Segre_gs})) into the result of Proposition \ref{propn:allufi_csm_hyper}, (\ref{eq:allufi_csm_hyper}).

 \begin{remark}
The following special case is from Suwa \cite{suwa1997classes}. Let $X$ be a smooth subvariety of $\pp^n$ which is a global complete intersection, further suppose that $X=V(f_0,\dots,f_r)$ with $d_i=\deg f_i$, then we have \begin{equation}
c_{SM}(X)=c(X)=(1+h)^{n+1} \cdot \prod_{i=0}^{\codim X} \frac{d_ih}{1+d_ih} \;\;\; \mathrm{in \;} A_*(\pp^n),
\end{equation} recall that $c(X)=c(TX)\cap [X]$ is the total Chern class of the smooth variety $X$. \label{remark:smooth_csm}
 \end{remark}
We note that using Remark \ref{remark:smooth_csm} the computation of $c_{SM}$ classes could be made much more efficient in the particular case where the input scheme is a complete intersection which is known to be smooth.

\section{ Main Theoretical Results} \label{subsectionComp_CSM}
In this section we describe the main ideas underlying the algorithms given in the next section.  We begin by stating and proving the main result of this note, Theorem \ref{projective_deg_theorem}. This theorem gives a method to compute the projective degrees of a rational map defined by a homogeneous ideal. 


\begin{theorem}
Let $I=(f_0,\dots,f_m)$ be a homogeneous ideal in $k[x_0, \dots , x_n]$ defining a $\varrho$-dimensional scheme $V=V(I)$, and assume, without loss of generality that all the polynomials $f_i$ generating $I$ have the same degree.  The projective degrees $(g_0,\dots , g_n)$ of $ \phi : \pp^n  \dashrightarrow \pp^m$,  $$
\phi:p \mapsto \left( f_0(p): \cdots : f_m(p) \right),
$$ are given by \begin{equation}
g_i= \dim_k \left( k[x_0, \dots , x_n,T]/(P_1 +\cdots +P_{i}+L_1+\cdots +L_{n-i}+L_A+S)\right).
\end{equation} Here $P_{\ell},L_{\ell},L_A$ and $S$ are ideals in $k[x_0,\dots , x_n,T]$  with \begin{align*}
P_{\ell}& = \left( \sum_{j=0}^m \lambda_{\ell,j} f_j\right), \;\;\; \lambda_{\ell,j} \mathrm{\; a \; general \; scalar \; in \;}k,\; \ell = 1,\dots,n,\\
S& =  \left( 1-T \cdot \sum_{j=0}^m \vartheta_{j} f_j \right), \;\;\; \vartheta_{j} \mathrm{\; a \; general \; scalar \; in \;}k,\\
L_{\ell}&= \left( \sum_{j=0}^n \mu_{\ell,j} x_j \right), \;\;\; \mu_{\ell,j} \mathrm{\; a \; general \; scalar \; in \;}k,\; \ell = 1,\dots,n,\\
L_{A}&= \left(1- \sum_{j=0}^n \nu_{j} x_j \right), \;\;\; \nu_{j} \mathrm{\; a \; general \; scalar \; in \;}k.\\
\end{align*}\label{projective_deg_theorem} 
Additionally $g_0=1$.  
\end{theorem}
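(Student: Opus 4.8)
The plan is to interpret each $g_i = \operatorname{card}\bigl(\phi^{-1}(\pp^{m-i}) \cap \pp^i\bigr)$ as the number of points in an explicit affine variety and then to recognize that this count equals the $k$-vector-space dimension of the coordinate ring of that variety, since the defining ideal will cut out a reduced zero-dimensional scheme. So the proof breaks into two conceptual pieces: (i) translate Harris's definition \eqref{eq:harris_proj_deg_def} into the ideal $P_1 + \cdots + P_i + L_1 + \cdots + L_{n-i} + L_A + S$, and (ii) argue that $\dim_k$ of the quotient counts the points with multiplicity one.

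First I would set up the geometry. A general $\pp^{m-i} \subset \pp^m$ is the common zero locus of $i$ general hyperplanes; pulling these back along $\phi$, a hyperplane $\sum_j \lambda_j y_j = 0$ in $\pp^m$ pulls back to $\sum_j \lambda_j f_j = 0$ in $\pp^n$, which is exactly the generator of $P_\ell$. Thus $\phi^{-1}(\pp^{m-i})$ is cut out by $P_1 + \cdots + P_i$, at least away from the base locus $V = V(I)$ where $\phi$ is undefined. A general $\pp^i \subset \pp^n$ is the intersection of $n-i$ general hyperplanes, giving the generators of $L_1, \dots, L_{n-i}$. The two ``dehomogenizing'' ideals $L_A$ and $S$ handle the passage from projective to affine coordinates: $L_A = (1 - \sum \nu_j x_j)$ restricts to a general affine chart of $\pp^n$ (equivalently, records that we are counting points in $\pp^n$, not cones in $k^{n+1}$), and $S = (1 - T\sum_j \vartheta_j f_j)$ is the Rabinowitsch trick that removes the base locus $V(I)$ — it forces $\sum_j \vartheta_j f_j \neq 0$, hence forces $(f_0 : \cdots : f_m)$ to be a well-defined point, so we are genuinely intersecting inside the domain of definition of $\phi$. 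I would cite the Sommese–Wampler Bertini theorem (already invoked after \eqref{eq:harris_proj_deg_def}) to guarantee that for general choices of all the scalars $\lambda_{\ell,j}, \mu_{\ell,j}, \nu_j, \vartheta_j$ the resulting scheme in $\Spec k[x_0,\dots,x_n,T]$ is zero-dimensional and reduced, so each of its points contributes exactly $1$ to $\dim_k$ of the coordinate ring; this is the standard fact that for a zero-dimensional ideal $J$, $\dim_k k[\underline x]/J = \sum_{p} \dim_k \mathcal{O}_{p}$, which is the number of points when $J$ is radical.

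The remaining cases are the endpoints. For $g_i$ with $i$ ranging, the count of $L_\ell$'s plus $P_\ell$'s is always $i + (n-i) = n$ linear-in-$f$ or linear-in-$x$ equations, plus the two inhomogeneous ones, which is the expected codimension for a finite set in the $(n+1)$-dimensional $\Spec k[x_0,\dots,x_n,T]$ — I should check the bookkeeping that $T$ contributes the one extra dimension absorbed by $S$. For $i = 0$: no $P_\ell$ appears, we intersect a general point $\pp^0 \subset \pp^n$ (cut by $n$ general hyperplanes $L_1, \dots, L_n$) with $\phi^{-1}(\pp^m) = \pp^n \setminus V(I)$, and generically that point avoids $V(I)$, so $g_0 = 1$; this matches the displayed claim and is also immediate from \eqref{eq:GammaDef} since $g_0 = \deg \pi_*[\Gamma_\phi] = \deg \pp^n = 1$.

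I expect the main obstacle to be the base-locus bookkeeping: making rigorous that $S$ exactly excises $V(I)$ and nothing else, i.e. that the points of $V(P_1 + \cdots + P_i + L_1 + \cdots + L_{n-i} + L_A)$ lying on $V(I)$ are precisely the ones killed by adjoining $S$, and that no point of $\phi^{-1}(\pp^{m-i}) \cap \pp^i$ in the domain of $\phi$ is lost. This requires knowing that $V(I)$ meets the general linear space $\pp^i$ in the expected way and that the general fiber $\phi^{-1}(\pp^{m-i})$ does not have an embedded or excess component along $V(I)$ — again a genericity/Bertini argument, but one that needs care because $\phi$ is only a rational map. The dimension-counting half, by contrast, is routine once reducedness and finiteness are in hand.
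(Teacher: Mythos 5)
Your overall route is the same as the paper's: pull general hyperplanes back along $\phi$ to get the $P_\ell$, cut with general linear forms $L_\ell$, dehomogenize with $L_A$, use $S$ as a Rabinowitsch-type equation to excise the base locus, invoke the Sommese--Wampler Bertini theorem for zero-dimensionality and multiplicity one, and identify the point count with $\dim_k$ of the quotient; $g_0=1$ is also disposed of the same way. So the architecture is right.

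The genuine gap is the step you explicitly leave open, and it is the crux of the theorem: adjoining $S$ removes the entire hypersurface $V(\sum_j \vartheta_j f_j)$, which is strictly larger than $V(I)$, so the danger is not that $V(I)$ fails to be excised (every point of $V(I)$ annihilates $\sum_j\vartheta_j f_j$ and so cannot satisfy $1-T\sum_j\vartheta_j f_j=0$) but that a legitimate point of $\phi^{-1}(\pp^{m-i})\cap\pp^i$ is accidentally thrown away. The paper closes this with an argument simpler than the one you anticipate: first apply Bertini only to the $\lambda$'s and $\mu$'s, so that $W\setminus V(I)$ (where $W$ is the zero set of the $P_\ell$ and $L_\ell$) is a finite reduced set $\{p_0,\dots,p_s\}$; since each value vector $(f_0(p_i),\dots,f_m(p_i))$ is nonzero, the $\vartheta$ with $\sum_j\vartheta_j f_j(p_i)\neq 0$ for all $i$ form a dense open $U_3\subset\pp^m$, and for such $\vartheta$ the assignment $p\mapsto\bigl(p,\,1/\sum_j\vartheta_j f_j(p)\bigr)$ identifies $W\setminus V(I)$ with the affine scheme cut out by $P_1+\cdots+P_i+L_1+\cdots+L_{n-i}+L_A+S$, preserving multiplicity one. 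In particular, nothing needs to be said about how $V(I)$ meets the general $\pp^i$ or about excess or embedded components of $\phi^{-1}(\pp^{m-i})$ along $V(I)$ — the point is only the order of quantifiers: $\vartheta$ is chosen general \emph{after} the finite good point set is fixed. Relatedly, your appeal to Bertini for the full system including $S$ and $L_A$ is not literally a Bertini statement (the $S$-equation is not a general member of a linear system on $\pp^n$); reducedness and finiteness must be obtained for $W\setminus V(I)$ first and then transported through the graph construction as above.
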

\begin{proof}
First we observe that by (\ref{eq:gfirst}) we have that $g_0=1$. Fix some $i=1,\dots,n$. For the rational map $\phi$ the projective degrees (see (\ref{eq:harris_proj_deg_def})) are given by  
$$g_i=\mathrm{card} \left(\phi^{-1} \left( \pp^{ m-i} \right) \cap \pp^{i} \right).$$ The inverse image under $\phi $ of a general hyperplane $\pp^{m-1}$ in $\pp^m$  is $$
\phi^{-1} \left( \pp^{m-1} \right)=V \left(\sum_{j=0}^m \lambda_j f_j \right)-V(f_0,\dots ,f_m) \subset \pp^n, \;\;\; \mathrm{for}\; \lambda_j \mathrm{\; a \; general \; scalar \; in \;}k
$$ and letting $$L_{\ell}=\left( \sum_{j=0}^n \mu_{\ell,j} x_j \right), \;\;\; \mu_{\ell,j} \mathrm{\; a \; general \; scalar \; in \;}k$$ for each ${\ell}$, this gives  $$
g_i=\mathrm{card} \left( \bigcap_{\ell=1}^{i}V \left(\sum_{j=0}^{m} \lambda_{\ell,j} f_j\right) \cap \bigcap_{{\ell}=1}^{n-i}V(L_{\ell}) -V(f_0,\dots ,f_m)\right). 
$$ Now let $$
W=\bigcap_{\ell=1}^{i}V \left(\sum_{j=0}^{m} \lambda_{\ell,j} f_j\right) \cap \bigcap_{{\ell}=1}^{n-i} V\left(  L_{\ell} \right) ,
$$ so $g_i=\mathrm{card}\left(W-V(f_0,\dots, f_m) \right)$. Let $\widetilde{W}=W-V(f_0,\dots ,f_m)$. By the Bertini theorem of Sommese and Wampler \cite[\S A.8.7]{sommese2005numerical} there exists open dense subsets $U_1 \subset \pp^{i\times m}$ and $U_2\subset \pp^{n-i \times n}$ such that for $ \lambda \in U_1 $ and $\mu \in U_2$, $ \widetilde{W}$ has dimension $0$ and $\oo_{\widetilde{W},p}$ is a regular local ring (equivalently the Jacobian matrix of the generators of $W$ evaluated at points in $  W -V(f_0,\dots ,f_m)$ has rank $n$). In what follows we take $ \lambda \in U_1 $ and $\mu \in U_2$.  Let us write $W -V(f_0,\dots ,f_m)=\left\lbrace p_0,\dots, p_s\right\rbrace$. Then $$U_3=\pp^m- \bigcup_{i=0}^sV \left(f_0(p_i)x_0+\cdots +f_m(p_i)x_m \right)$$ is open and dense in $\pp^m$, because $(f_0(p_i),\dots,f_m(p_i))\neq (0,\dots,0)$ for all $i$. Take $ \vartheta=(\vartheta_0,\dots ,\vartheta_m)\in U_3$; then $$ W\cap V \left( \sum_{j=0}^m \vartheta_{j} f_j \right)  -V(f_0,\dots ,f_m)$$ is empty.  Now consider the ideals $L_{\ell}$ and $ \left(\sum_{j=0}^m \lambda_{\ell,j} f_j \right)$ as ideals in the ring $ k[x_0, \dots , x_n,T]$, and define $V_S=V(S)$ where $$ S=\left( 1-T \cdot \sum_{j=0}^m \vartheta_{j} f_j\right) $$ is an ideal in $k[x_0, \dots , x_n,T]$.
For a point $p\in V(f_0,\dots,f_m)$ we have that $$f_j(p)=0, \; \;\; j=0,1,\dots, m$$ which implies that $p$ is not in $V_S$ since $p$ cannot be a solution to the equation $ 1-T \cdot \sum_{j=0}^m \vartheta_{j} f_j=0$. Now take $p \in W-V(f_0,\dots ,f_m)$ then $$T_p=\frac{1}{\sum_{j=0}^m \vartheta_{j} f_j(p)}$$ is well defined since for $\vartheta\in U_3$ we have that $W\cap V \left( \sum_{j=0}^m \vartheta_{j} f_j \right)  -V(f_0,\dots ,f_m) $ is empty, so $(p,T_p) \in V_S$. Now let $\widehat{W} \subset \pp^n \times \mathbb{A}^1$ be the variety given by a linear embedding of $W$ in $ \pp^n \times \mathbb{A}^1$, where $\mathbb{A}^1=\Spec(k[T])$. We have \begin{equation}
\pi(\widehat{W}\cap V_S)=W-V(f_0,\dots,f_m),
\end{equation} where $\pi $ is the projection $\pi: \pp^n \times \mathbb{A}^1 \mapsto \pp^n$, and in particular $$
\mathrm{card}(\widehat{W}\cap V_S)=\mathrm{card}(W-V(f_0,\dots,f_m)).
$$

Rather than considering the intersection $\widehat{W}\cap V_S $ in $  \pp^n \times \mathbb{A}^1$ we take  $W\subset \mathbb{A}^{n}$ i.e. we dehomogenize by taking $$W= \bigcap_{\ell=0}^{i}V \left(\sum_{j=0}^{m} \lambda_{\ell,j} f_j\right) \cap \bigcap_{{\ell}=1}^{n-i}V(L_{\ell}) \cap V(L_A)\subset \mathbb{A}^n$$ and consider the intersection $\widehat{W}\cap V_S$ in $\mathbb{A}^{n+1}$. As the points in $\phi^{-1} \left( \pp^{ m-i} \right) \cap \pp^{i} $ have multiplicity one (by the Bertini theorem of Sommese and Wampler~\cite[\S A.8.7]{sommese2005numerical}) the cardinality of the zero dimensional set $$ \bigcap_{\ell=0}^{i}V \left(\sum_{j=0}^{m} \lambda_{\ell,j} f_j\right) \cap \bigcap_{{\ell}=1}^{n-i}V(L_{\ell}) \cap V(L_A) \cap V_S \subset \mathbb{A}^{n+1}$$ is given by the vector space dimension of $$
 k[x_0, \dots , x_n,T]/(P_1 +\cdots +P_{i}+L_1+\cdots +L_{n-i}+L_A+S).
 $$
\end{proof}

Applying Theorem \ref{projective_deg_theorem}, we immediately obtain Algorithm \ref{algorithm:csm_polar}, which allows us to compute the projective degrees of a map $\phi$ defined by a homogeneous ideal $I$ in $k[x_0,\dots, x_n]$.

Using Theorem \ref{projective_deg_theorem}, in the form of Algorithm \ref{algorithm:csm_polar}, and Proposition \ref{propn:aluffi_segre} we may compute the Segre class of a scheme $V$ in $\pp^n$ defined by an ideal $I=(f_0,\dots , f_m)$ in $k[x_0,\dots ,x_n]$ as follows. Assume, without loss of generality, that all generators of $I$ have degree $d$. Applying Proposition \ref{propn:aluffi_segre}, the projective degrees of the map $$
\phi : \mapdefdot{\pp^n}{ \pp^m}{p}{(f_0(p): \cdots : f_m(p))}
$$ can be used to compute the Segre class of the scheme defined by the ideal $I$ in $\pp^n$. Written explicitly in this case, the result of Proposition \ref{propn:aluffi_segre} becomes \begin{equation}
s(V,\pp^n)=1-\sum_{i=0}^n\frac{g_i h^i}{(1+dh)^{i+1}} \in A_*(\pp^n),
\label{eq:Segre_gs}
\end{equation} where $ V=V(I)$, $d=\deg(f_i)$ and $(g_0,\dots,g_n)$ are the projective degrees of the map $\phi$. We summarize this method for computing the Segre class in Algorithm \ref{algorithm:SegreAlg}.

\renewcommand{\figurename}{\textit{Algorithm}}
If we take $\phi$ in Theorem \ref{projective_deg_theorem} above to be the polar map $\varphi$ (see (\ref{eq:polar_map_def})) we have the following corollary, which will allow us to compute the Chern-Schwartz-MacPherson class and Euler characteristic of projective varieties. 

\begin{corr}

Let $V$ be a hypersurface of $\pp^n$ defined by the homogeneous polynomial ideal $(f) $ in $k[x_0, \dots , x_n]$. Since we take the $c_{SM}$ class of $V$ to be the $c_{SM}$ class of its support, i.e. $c_{SM}(V)=c_{SM}(V_{red})$, we assume without loss of generality that $f$ is square-free.  The projective degrees $(g_0,\dots , g_n)$ of $ \varphi : \pp^n  \dashrightarrow \pp^n$,  $$
\varphi:p \mapsto \left( \frac{\partial f}{\partial x_0}(p): \cdots : \frac{\partial f}{\partial x_n}(p) \right),
$$ are given by \begin{equation}
g_i= \dim_k \left( k[x_0, \dots , x_n,T]/(P_1 +\cdots +P_i+L_1+\cdots +L_{n-i}+L_A+S)\right).
\end{equation} Here $P_{\ell},L_{\ell},L_A$ and $S$ are ideals in $R[T]=k[x_0,\dots , x_n,T]$  with $P_{\ell} = \left( \sum_{j=0}^m \lambda_{\ell,j} f_j\right)$ for $ \lambda_{\ell,j}$ a general scalar in $k$, $S =  \left( 1-T \cdot \sum_{j=0}^m \vartheta_{j} f_j \right)$, for $\vartheta_{j}$ a general scalar in $k$, $L_{\ell}$ a general homogeneous linear form for $\ell=1,\dots,n$ and $L_A$ a general affine linear form. Additionally $g_0=1$. \label{corr:polar_deg_csm}
\end{corr}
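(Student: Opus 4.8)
The plan is to obtain Corollary \ref{corr:polar_deg_csm} as an immediate specialization of Theorem \ref{projective_deg_theorem}. The map $\varphi$ in the corollary is precisely the rational map $\phi:\pp^n\dashrightarrow\pp^m$ of Theorem \ref{projective_deg_theorem} in the case $m=n$ and with the chosen tuple of homogeneous polynomials being the partials $\left(\frac{\partial f}{\partial x_0},\dots,\frac{\partial f}{\partial x_n}\right)$ rather than a tuple of generators of an arbitrary ideal $I$. So the first thing to check is that this tuple satisfies the hypotheses of the theorem: the polynomials must be homogeneous of a common degree and must not all vanish identically. Homogeneity and equality of degrees are automatic, since $f$ is homogeneous of some degree $d$, so each $\frac{\partial f}{\partial x_j}$ is homogeneous of degree $d-1$; and not all partials vanish because $f$ is squarefree (in particular nonconstant) and $k$ has characteristic zero, so the Euler relation $d\cdot f=\sum_j x_j\frac{\partial f}{\partial x_j}$ forces the ideal of partials to be nonzero. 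I would state these two observations explicitly.

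Next I would simply invoke Theorem \ref{projective_deg_theorem} with $f_j:=\frac{\partial f}{\partial x_j}$ and $m:=n$. The theorem then yields directly the asserted formula
\[
g_i=\dim_k\left(k[x_0,\dots,x_n,T]/(P_1+\cdots+P_i+L_1+\cdots+L_{n-i}+L_A+S)\right),
\]
together with $g_0=1$, where the ideals $P_\ell$, $L_\ell$, $L_A$, $S$ are exactly those constructed in the theorem, now with the partials in the roles of the $f_j$. Here $P_\ell=\left(\sum_{j=0}^n\lambda_{\ell,j}\frac{\partial f}{\partial x_j}\right)$, $S=\left(1-T\cdot\sum_{j=0}^n\vartheta_j\frac{\partial f}{\partial x_j}\right)$, and $L_\ell$, $L_A$ are the general homogeneous and affine linear forms from the theorem; the statement of the corollary's use of $f_j$ notation should be read with this identification $f_j=\partial f/\partial x_j$. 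It is worth noting that there is nothing further to prove about the $c_{SM}$ class itself in this corollary — the corollary only computes the projective degrees of the polar map $\varphi$; the passage to $c_{SM}(V)$ happens afterward via Theorem \ref{Csm_computing _thorem}, so I would not touch that here.

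Since the result is a pure specialization, there is essentially no obstacle; the only point requiring a word of care is the genericity of the scalars. In Theorem \ref{projective_deg_theorem} the open dense sets $U_1,U_2,U_3$ on which the $\lambda$'s, $\mu$'s and $\vartheta$'s must lie are obtained from a Bertini-type argument applied to the tuple of polynomials defining the map; applying that argument to the specific tuple of partials of $f$ is legitimate because Bertini's theorem in the form of Sommese–Wampler \cite[\S A.8.7]{sommese2005numerical} applies to any linear system without fixed components on $\pp^n$ minus the base locus, and the linear system spanned by the partials qualifies. Thus the same statement "general scalar in $k$" carries over verbatim, and I would simply remark that the genericity hypotheses are inherited from the theorem. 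This completes the proof.
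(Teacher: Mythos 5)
Your proposal is correct and matches the paper's treatment: the paper obtains the corollary precisely by taking $\phi$ in Theorem \ref{projective_deg_theorem} to be the polar map $\varphi$, i.e.\ by specializing the theorem to the tuple of partial derivatives of $f$ with $m=n$. Your added checks (common degree $d-1$ of the partials and their nonvanishing via the Euler relation in characteristic zero, plus the remark that the genericity of the scalars is inherited) are harmless elaborations of the same argument.
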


Corollary \ref{corr:polar_deg_csm} combined with Theorem \ref{Csm_computing _thorem} can be used to compute the Chern-Schwartz-Macpherson Class and Euler characteristic of a projective hypersurface. This formula can be extended to higher codimension using the inclusion/exclusion relation for $c_{SM}$ classes, see Proposition \ref{propn:csm_higher_codim}. This is described explicitly in Algorithm \ref{algorithm:CSMAlg}.

\section{The Algorithms}\label{algorithms}
In this section the result of Theorem \ref{projective_deg_theorem} is used to construct an algorithm to compute the projective degrees using a computer algebra system, presented in Algorithm \ref{algorithm:csm_polar}. Algorithm \ref{algorithm:csm_polar} is in turn used to construct Algorithm \ref{algorithm:SegreAlg} which computes the Segre class $s(V,\pp^n)$ of a subscheme $V$ of $\pp^n$ and Algorithm \ref{algorithm:CSMAlg} which computes $c_{SM}(V)$ and/or $\chi(V)$.

Below we describe Algorithm \ref{algorithm:csm_polar} which applies the result of Theorem \ref{projective_deg_theorem} to compute the projective degrees of a map $\phi:\pp^n \dashrightarrow \pp^m$, $\phi:p \mapsto \left( f_0(p): \cdots : f_m(p) \right) $ corresponding to an ideal $I=(f_0, \dots, f_m)$ of $k[x_0, \dots, x_n]$. R.ideal($f_0,\dots , f_r$) denotes a function which creates the ideal $(f_0,\dots , f_r)$ in the ring $R$ and $k$.random() denotes the function which generates a general element of a field $k$.

\begin{algorithm}\label{algorithm:csm_polar}
\textbf{def projective\_deg\_map:}
\begin{itemize}
\item \textbf{Input:} $I=(f_0, \dots, f_m) $ a homogeneous ideal in $k[x_0,\dots,x_n]$, such that $\deg(f_i)=d$ for all $f_i\neq 0$. 
\item \textbf{Output:} The projective degrees $(g_0,\dots, g_n)$ of a map $ \phi : \pp^n  \dashrightarrow \pp^m$, $\phi:p \mapsto \left( f_0(p): \cdots : f_m(p) \right).$ 
\begin{itemize}
\item Set $ R=k[x_0,\dots ,x_n,T]$. 
\item \textbf{For} $i=0$ \textbf{to} $n$\textbf{:} \begin{itemize}
\item $P= \sum_{\ell=1}^i R.\mathrm{ideal}\left(  \sum \limits_{j=0}^m  k.\mathrm{random()}  \cdot f_j \right)$.
\item $L=\sum_{\ell=1}^{n-i} R.\mathrm{ideal}\left(  \sum \limits_{j=0}^n  k.\mathrm{random()}  \cdot  x_{j} \right)$.
\item $L_A= R.\mathrm{ideal}\left(1+  \sum \limits_{j=0}^n  k.\mathrm{random()}  \cdot  x_{j} \right)$.
\item $V_S=R.\mathrm{ideal}\left(1-T  \sum \limits_{j=0}^m  k.\mathrm{random()}  \cdot  f_j\right)$.
\item $        \mathrm{zero\_dim\_ideal}=P+L+L_A+V_S \subset R$. 
  
\item $    g_i=\dim_k(k[x_0, \dots , x_n,T] / \mathrm{zero\_dim\_ideal})$.
\end{itemize}
\item \textbf{Return} $(g_0,\dots, g_n)$.
\end{itemize}
\end{itemize}
\end{algorithm}

Below we describe Algorithm \ref{algorithm:SegreAlg} which compute the Segre class $s(V,\pp^n)$ in $A_*(\pp^n)$ for $V$ a subscheme of $\pp^n$ defined by a homogenous ideal $I$. We assume, without loss of generality since $I$ is homogenous, that all generators of $I$ have the same degree.
\begin{algorithm}\label{algorithm:SegreAlg}
\textbf{def segre\textunderscore proj\textunderscore deg:}
 \begin{itemize}
\item \textbf{Input:} A homogeneous ideal $I=(w_0, \dots , w_m)$ in $k[x_0,\dots,x_n]$ such that $\deg(w_j)=dj$ for all $j$ defining a scheme $V=V(I)$ in $\pp^n$. 
\item \textbf{Output:} The Segre class $s(V,\pp^n)$ in $A_*(\pp^n)\cong \ZZ[h]/(h^{n+1})$.
\begin{itemize}
\item Compute $(g_0,\dots,g_n)\;= $ projective\_deg\_map$(I)$ (i.e.\ calculate $(g_0,\dots,g_n)$ using Algorithm \ref{algorithm:csm_polar} above).
\item Compute $s(V,\pp^n)=1-\sum_{i=0}^n\frac{g_i h^i}{(1+dh)^{i+1}},$ see \eqref{eq:Segre_gs}.
\item \textbf{return} $s(V,\pp^n)$.
\end{itemize}
\end{itemize}
\end{algorithm}

Below we describe Algorithm \ref{algorithm:CSMAlg} which computes the Chern-Schwartz-Macpherson class $c_{SM}(V)$ in $A_*(\pp^n)$ and/or the Euler chacteristic $\chi(V)$ for $V$ a subscheme of $\pp^n$ defined by a homogenous ideal $I$. $\mathcal{L}_I\mathrm{.parity}(f)$ denotes a function such that $\mathcal{L}_I\mathrm{.parity}(f)=1$ if $f$ is a product of an odd number of generators of $I$ and $\mathcal{L}_I\mathrm{.parity}(f)=-1$ if $f$ is a product of an even number of generators of $I$.

\begin{algorithm}\label{algorithm:CSMAlg}
\textbf{def csm\_polar:}
 \begin{itemize}
\item \textbf{Input:} A homogeneous ideal $I=(f_0,\dots,f_r)$ in $k[x_0,\dots,x_n]$ defining a scheme $V=V(I)$ in $\pp^n$. 
\item \textbf{Output:} $c_{SM}(V)$ in $A_*(\pp^n)\cong \ZZ[h]/(h^{n+1})$ and/or the integer $\chi(V)$.
\begin{itemize}
\item Make a list $\mathcal{L}_I$ of all generators and all products of generators of the ideal $I$.
\item \textbf{For} $f$ \textbf{in} $\mathcal{L}_I$: 
\begin{itemize}
\item Set $ J=\left(\frac{\partial f}{\partial x_0}, \dots, \frac{\partial f}{\partial x_n} \right)$.
\item Compute the projective degrees $$(g_0,\dots,g_n)= \mathrm{projective\_deg\_map}(J) \;\;\; [\mathrm{See} \;\mathrm{Algorithm \; \ref{algorithm:csm_polar}}].$$
\item Compute $ c_{SM}(V(f))=(1+h)^{n+1}-\sum_{j=0}^{n}g_j (-h)^j(1+h)^{n-j},$ see Theorem \ref{Csm_computing _thorem}.
\item Store $c_{SM}(V(f))$.
\end{itemize}
\item Apply the inclusion/exclusion property of $c_{SM}$ classes (Proposition \ref{propn:csm_higher_codim}) to obtain $$c_{SM}(V)= \sum_{f \in \mathcal{L}_I} \mathcal{L}_I\mathrm{.parity}(f) \cdot c_{SM} \left(V( F_{\left\lbrace S \right\rbrace } )\right)$$ 
\item \textbf{Return} $c_{SM}(V)$ and/or $\chi(V) = \int c_{SM}(V)$. 
\end{itemize}
\end{itemize}
\end{algorithm}

We now give an example of how Algorithms \ref{algorithm:csm_polar}, \ref{algorithm:SegreAlg} and \ref{algorithm:CSMAlg} are used to compute Segre classes, $c_{SM}$ classes and the Euler characteristic. We will again use the variety considered in Example \ref{example:csm_ex}.
\begin{example}
Let $V=V(I)$ be the subvariety of $\pp^4$ defined by the ideal $I=(4x_3x_2x_4x_1-x_0^3x_1,x_0x_1x_3x_4-x_2^3x_3)=(f_0,f_1)$ in $k[x_0,x_1,x_2,x_3,x_4]$. Also set $d=\deg(f_0)=\deg(f_1)=4$.


We first compute the Segre class $s(V,\pp^4)$ of $V$ in $\pp^4$ considered as an element of $A_*(\pp^4) \cong \ZZ[h]/(h^{5})$ where $h$ is the rational equivalence class of a hyperplane, meaning a hypersurface $W$ of degree $d$ in $\pp^4$ is represented as $[W]=d\cdot h$. We will follow the procedure of Algorithm \ref{algorithm:SegreAlg}. Consider the rational map $\phi: \pp^4 \dashrightarrow \pp^1 $ defined by the ideal $I$, that is $$
\phi: p \mapsto (f_0(p):f_1(p)). 
 $$We may compute the projective degrees $(g_0,g_1,g_2,g_3, g_4)$ of this rational map (see \eqref{eq:harris_proj_deg_def}) using Theorem \ref{projective_deg_theorem}.  Let $R=k[x_0,x_1,x_2,x_3,x_4,T]$. Theorem \ref{projective_deg_theorem} gives us that $g_0=1$ and that we may compute$$
g_1= \dim_k(R/(P_1+L_1+L_2+L_3+L_A+S))
$$ where $P_1=(7f_0+9f_1)$ is the ideal in $R$ defined by a general linear combination of the generators of $I$; $L_1=(-11x_0+21x_1-3x_2-18x_3+22x_4)$, $L_2=(31x_0-23x_1+2x_2+47x_3-43x_4)$ and $L_3=(13x_0-52x_1-29x_2+71x_3-15x_4)$ are ideals in $R$ defined by general homogeneous linear forms in $k[x_0,x_1,x_2,x_3,x_4] $,  $L_A=(17-14x_0+41x_1+12x_2-91x_3-3x_4)$ is an ideal in $R$ defined by an affine general linear form in $k[x_0,x_1,x_2,x_3,x_4] $, and $S$ is the ideal of $R$ given by $S=(1-T(3f_0-5f_1))$. The expression $3f_0-5f_1$ in the definition of $S$ is a general linear combination of the generators of $I$. This gives $g_1=4$. In a similar manner we may compute the remaining projective degrees to obtain $$(g_0,g_1,g_2,g_3, g_4)=(1,4,0,0,0).$$ Applying the formula in \eqref{eq:Segre_gs} we obtain \begin{align*}
s(V,\pp^n)=&1- \frac{ 1}{1+4h} -\frac{ 4h}{(1+4h)^{2}}\\
=&768h^4  - 128h^3  + 16h^2\in A_*(\pp^4).
\end{align*}

Now we compute $c_{SM}(V)$ and $\chi(V)$ using the procedure of Algorithm \ref{algorithm:CSMAlg}. By the inclusion/exclusion property of $c_{SM}$ classes (Proposition \ref{propn:csm_higher_codim}) we have that \begin{equation}
c_{SM}(V)= c_{SM}(V(f_0))+c_{SM}(V(f_1))-c_{SM}(V(f_0\cdot f_1)).
\end{equation} We first calculate $ c_{SM}(V(f_0))$; we begin by finding the projective degrees of the map corresponding to the ideal $J$ generated by the partial derivatives of $f_0$ $$J=(\nabla  f_0)=(3x_0^2x_1,-x_0^3+4x_2x_3x_4,4x_1x_3x_4,4x_1x_2x_4,4x_1x_2x_3),$$ that is we must find the projective degrees $(g_0, g_1,g_2,g_3, g_4)$ of the rational map $\varphi: \pp^4 \dashrightarrow \pp^4$ (sometimes referred to as the polar or gradient map \eqref{eq:polar_map_def}) given by $$
\varphi: (p_0:p_1:p_2 : p_3:p_4) \mapsto (3p_0^2p_1:-p_0^3+4p_2p_3p_4:4p_1p_3p_4:4p_1p_2p_4:4p_1p_2p_3).
$$ Using Corollary \ref{corr:polar_deg_csm} we compute that the projective degrees are $(g_0,g_1,g_2,g_3,g_4)=(1,3,6,6,2)$. By Theorem \ref{Csm_computing _thorem} this gives us that \begin{align*}
c_{SM}(V(f_0))=&(1+h)^{5}-\sum_{j=0}^{4}g_j (-h)^j(1+h)^{4-j}\\
=& 5h^4  + 9h^3  + 7h^2  + 4h \in A_*(\pp^4).
\end{align*} 

Similarly we find that the projective degrees of the polar maps corresponding to $f_1$,  and $ f_0f_1$ are $(1, 3, 6, 6, 2)$ and $ (1, 7, 23, 29, 12)$ respectively. This gives the $c_{SM}$ classes: \begin{align*}
c_{SM}(V(f_1))=&5h^4  + 9h^3  + 7h^2  + 4h, \\
c_{SM}(V(f_0f_1))=&5h^4  + 10h^3  + 2h^2  + 8h.
\end{align*}

Combining these we obtain $$c_{SM}(V)=5h^4  + 8h^3  + 12h^2 \in A_*(\pp^4) \cong \ZZ[h]/(h^5).$$ From this we may immediatly conclude that the Euler characteristic of $V$ is $5$ since the Euler chacteristic of $V$ is the degree of the zero dimensional component of $c_{SM}(V)$, i.e.\ the coefficent of $h^4$ in $c_{SM}(V)$ since $V\subset \pp^4$. Eqivilently we may write \begin{align*}
\chi(V)=&\int c_{SM}(V) \\
=&\int 5h^4  + 8h^3  + 12h^2 =5,
\end{align*} where $\int$ denotes the degree of the zero dimensional component of a Chow ring element, that is the coefficent of $h^4$ in this example. \label{example:alg_ex}
\end{example}

 \section{Performance}
 \label{section:Performance}
 In this section we compare the performance of our algorithms to compute Segre classes, $c_{SM}$ classes and Euler to other existing algorithms.  All algorithms are implemented in Macaulay2 \cite{M2} to offer a fair comparison for testing purposes. The Macaulay2 \cite{M2} implementations use Bertini \cite{Bertini} for numerical computations when a numeric option is provided. The methods segre\_proj\_deg (Algorithm \ref{algorithm:SegreAlg}) and csm\_polar (Algorithm \ref{algorithm:CSMAlg}) are also implemented in Sage \cite{sage} and timings for the Sage implementation of csm\_polar (Algorithm \ref{algorithm:CSMAlg}) are included in Table \ref{table:results}. The Sage implementation of our algorithm uses PHCpack \cite{verschelde1999algorithm} for the numerical computation option. 

A list of all examples used for testing benchmarks in Table \ref{table:SegreResults} and Table \ref{table:results} can be found below in Appendix \ref{App:Appendix}. The examples are given in the form of Macaulay2 \cite{M2} input.  

The Macaulay2 \cite{M2} and Sage \cite{sage} implementations of our algorithm for computing $c_{SM}$ classes, Euler characteristics and Segre classes of projective varieties can be found at \url{https://github.com/Martin-Helmer/char-class-calc}. The Macaulay2 \cite{M2} implementation is also available as part of the ``CharacteristicClasses'' package in Macaulay2 version 1.7 and above and can be accessed using the option ``Algorithm$=>$ProjectiveDegree", see the Macually2 documentation \url{http://www.math.uiuc.edu/Macaulay2/doc/Macaulay2-1.7/share/doc/Macaulay2/CharacteristicClasses/html/} for further details. 

Segre (Aluffi) and CSM (Aluffi) refer to the algorithms of Aluffi \cite{aluffi2003computing}, as implemented by Aluffi in the Macaulay2 program available from Aluffi's webpage, \url{http://www.math.fsu.edu/~aluffi/CSM/CSM.html}. The main computational step in the both algorithms of Aluffi is the computation of the Rees algebra. Specifically to calculate $ s(V,\pp^n)$ Allufi computes $Bl_V\pp^n$  and to calculate $c_{SM}(V)$ Aluffi computes $Bl_Y\pp^n$ for $Y$ the singularity subscheme of each hypersurface appearing in Proposition \ref{propn:csm_higher_codim}. 

The algorithm segreClass (E.J.P.) is the algorithm based on Proposition \ref{propn:JostSegre} given by Eklund, Jost and Peterson in \cite{Jost}. CSM (Jost) is the algorithm described in \cite{jost2013algorithm}. For testing of both segreClass (E.J.P.) and CSM (Jost) we used the implementation of Jost available in the ``CharacteristicClasses'' Macaulay2 package on the webpage \url{http://www.math.illinois.edu/Macaulay2/doc/Macaulay2-1.6/share/doc/Macaulay2/CharacteristicClasses/html/}. In Macaulay2 version 1.7 and above Jost's implementations are accessed using the option ``Algorithm$=>$ResidualSymbolic". The main computational step for the algorithms of both \cite{Jost} and \cite{jost2013algorithm} is the computation of the saturations $ J:I^{\infty}$ to compute the residuals as in ($\ref{eq:JostSegre}$). Specifically to calculate $ s(V,\pp^n)$ Jost's implementation computes the residuals via saturations as described in Proposition \ref{propn:JostSegre} and to calculate $c_{SM}(V)$ the implementation computes $s(Y,\pp^n)$ in the same way for $Y$ the singularity subscheme of each hypersurface appearing in Proposition \ref{propn:csm_higher_codim}. 

The method segre\textunderscore proj\_deg uses Algorithm \ref{algorithm:SegreAlg}. Algorithm \ref{algorithm:CSMAlg} is referred to as csm\_polar in Table \ref{table:results}; the Macaulay2 implementation is referred to as csm\_polar (M2) and the Sage implementation is csm\_polar (Sage).  The primary computational cost of Algorithm \ref{algorithm:SegreAlg} and Algorithm \ref{algorithm:CSMAlg} is the computation of the projective degrees $(g_0\dots,g_n)$ which is done by computing the vector space dimension of a ring modulo a zero dimension ideal. This computation can be done symbolically using Gr\"obner bases or numerically using Bertini \cite{Bertini} or some other package for homotopy continuation. 

All symbolic computations are performed over the finite field with $32749$ elements, the numeric computations are done over $\mathbb{Q}$. Note that the $c_{SM}$ class is, technically, only defined when working over fields of characteristic zero (see, for example, \cite{aluffi2limits} for further discussion), however since the result of the computation is the same when working over $\mathbb{Q}$ and over a finite field for a large prime on all examples considered we give the run times over the finite field with $32749$ elements for symbolic computations.  This approach is also used for example computations of characteristic classes by Aluffi \cite{aluffi2003computing} and Jost \cite{jost2013algorithm}, as well as by  Eklund, Jost and Peterson \cite{Jost}. We also note that even when the symbolic methods are run over $\mathbb{Q}$ they still perform better than the numeric versions for each algorithm. All computations were performed on a computer with a 2.40GHz Intel Core i5-450M CPU and 4 GB of RAM.

 We would also like to remark that in the process of developing Algorithm \ref{algorithm:csm_polar} we considered other methods to remove the points in $V(f_1,\dots, f_n)$ (see Theorem \ref{projective_deg_theorem}) which involved performing primary decompositions and evaluating at points in $V(f_1,\dots, f_n)$. However, the main speed up over the algorithm of \cite{Jost} and over the direct numeric calculations was achieved by structuring the equations as they are given in Theorem \ref{projective_deg_theorem}, i.e. by adding the ideal $$S =  \left( 1-T \cdot \sum_{j=0}^m \vartheta_{j} f_j \right), \;\;\; \vartheta_{j} \mathrm{\; a \; general \; scalar \; in \;}k, 
 $$ and working in $k[x_0,\dots,x_n,T]$.
 
The algorithms of Eklund, Jost and Peterson \cite{Jost} and Jost \cite{jost2013algorithm} consider similar algebraic objects (namely the degrees of the residual sets, see Proposition \ref{propn:JostSegre}) to those used in the calculation of the projective degrees in Algorithm \ref{algorithm:csm_polar}. As such it is likely that the performance of the algorithms of \cite{Jost} and \cite{jost2013algorithm} could also be improved by structuring the equations of the residuals considered in \cite{Jost} in the same way as we do here to compute the projective degrees using Theorem \ref{projective_deg_theorem}.   
 \subsection{Timings for the Compution of Segre Classes} \label{segre_results_section}
 In Table \ref{table:SegreResults} we compare the running times of the Segre class computation method using Algorithm \ref{algorithm:SegreAlg} with the running times of two other algorithms to compute Segre classes. 
 
The method of Algorithm \ref{algorithm:SegreAlg} and that of Eklund, Jost and Peterson \cite{Jost} also have numeric implementations, which use the program Bertini \cite{Bertini} for homotopy continuation. However, the numeric implementations of both algorithms are significantly slower than the corresponding symbolic implementations. Only one example in Table \ref{table:SegreResults} finished running in the allotted time (this is the rational normal curve in $\pp^7$); the numeric timings are listed in brackets for this case.

\begin{table}
\resizebox{\linewidth}{!}{
\begin{tabular}{@{} l *6c @{}}
\toprule 
 \multicolumn{1}{c}{{\color{Ftitle} \textbf{Input}}}    & {\color{Ftitle} Segre (Aluffi \cite{aluffi2003computing}) }  &  {\color{Ftitle} segreClass(E.J.P. \cite{Jost})}  &  {\color{Ftitle} segre\textunderscore proj\textunderscore deg (Alg. \ref{algorithm:SegreAlg}) }  \\ 
 \midrule 
  Rational normal curve in $\pp^7$ & - & 7s (9s) & {\color{line}8s} (15s)\\
  Segre embedding of $\pp^2 \times \pp^3$ in $\pp^{11}$  & 2s & -  & {\color{line}52s }\\
   Smooth deg. $81$ variety in $\pp^7$ & - & 36.4s  & {\color{line}1.5s  } \\
 Degree $10$ variety in $\pp^8$& - & 59s  & {\color{line}18s } \\
 Degree $21$ variety in $\pp^9$  & 0.5 s & 33s  & {\color{line}10s } \\
  Degree $48$ variety in $\pp^6$  & - & 173s  & {\color{line}6s }\\
  
\bottomrule
 \end{tabular}}
 \caption{Run time comparision of different algorithms for computing the Segre class of a projective variety. Timings for a numerical implemention of the algorithms using Bertini \cite{Bertini} are included in brackets where available. We use - to denote computations that were stopped after ten minutes (600 s), for the numeric computations that do not finish in less than ten minutes we simply omit the result. \label{table:SegreResults}}
 \end{table}
We note that for all examples except the degree $21$ variety in $\pp^9$ and the Segre embedding of $\pp^2 \times \pp^3$ in $\pp^{11}$ our algorithm performs favourably in comparison to the other algorithms. For these two examples it seems that the particular structure of the ideals being considered happens to favour the computation of the Rees algebra. These examples were included to show that even though Algorithm \ref{algorithm:SegreAlg} tends to be faster in general there are still some cases where the special structure of the ideal being considered makes another technique, such as computing the Rees algebra, more advantageous. Such outliers are less likely to turn up in the $c_{SM}$ class computations since for any codimension greater than one we must compute many $c_{SM}$ classes of different ideals arising from the inclusion/exclusion, and hence the special structure of any one particular ideal plays less of a role. 
 
\subsection{Timings for the Compution of $c_{SM}$ Classes and Euler Characteristics}\label{csm_results_section}

In Table \ref{table:results} we compare the running times of our algorithm to compute the $c_{SM}$ class and Euler characteristic (Algorithm \ref{algorithm:CSMAlg}) with the running times of several other algorithms to compute the $c_{SM}$ class and Euler characteristic. 

The function euler in Table \ref{table:results} is the built in Macaulay2 function which calculates Hodge numbers to compute the Euler characteristic, and does not compute the $c_{SM}$ class. The method euler only works for smooth projective varieties. Note that the Hodge numbers are found by computing the ranks of appropriate cohomology rings and this process is computationally expensive in general; this is likely the reason that the euler function does not perform well for examples in larger ambient dimension and with larger degree. 

We observe that the symbolic implementation of the algorithm described in Algorithm \ref{algorithm:CSMAlg} performs better than the other existing algorithms in all cases shown in Table \ref{table:results}. It is perhaps not surprising that the algorithm of Aluffi \cite{aluffi2003computing} takes longer than the others in many cases as it computes the Rees algebra for each hypersurface, which is in general rather difficult. The algorithm of Jost \cite{jost2013algorithm} computes the Segre class explicitly, using saturations to find the residuals, before computing the $c_{SM}$ class. This also seems to be slower in general than the projective degree calculations of Algorithm \ref{algorithm:CSMAlg}. 

We observe that the numeric implementations of the algorithm of Jost \cite{jost2013algorithm} and csm\_polar are slower than their symbolic counterparts in all tested cases, with the majority not finishing in the allotted time of ten minutes. As was the case with the Segre class computations the symbolic implementation of each algorithm tends to be much faster regardless of which algorithm or which numerical package is used. The reason for the consistently superior performance of the symbolic methods for the types of equations considered in these characteristics class computations is not clear to us. We do, however, believe that the numeric implementations could still be useful for computation both now and in the future, as they are easily parallelizable and their effectiveness on these types of systems could improve over time. 

\begin{table}
\resizebox{\linewidth}{!}{
\begin{tabular}{@{} l *5c @{}}
\toprule 
 \multicolumn{1}{c}{{\color{Ftitle} \textbf{Input}}}    & {\color{Ftitle} CSM (Aluffi)}  &  {\color{Ftitle} CSM (Jost)}  &  {\color{Ftitle} csm\textunderscore polar (M2)}&  {\color{Ftitle} csm\textunderscore polar (Sage)}& {\color{Ftitle} euler } \\ 
 \midrule 
  Twisted cubic & 0.3s & 0.1s (35s) & {\color{line} 0.1s} (37s) & {\color{line}0.1s} (0.6s) & 0.2s \\ 
 Segre embedding of $ \pp^1 \times \pp^2 $ in $\pp^5$ & 0.4s & 0.8s (148s) & {\color{line}0.2s }(152s)&  {\color{line}0.2s} (57s)& 0.2s \\
 Smooth degree $8$ variety in $\pp^4$ & - & 1.2s (-) & {\color{line} 0.6s } (-)& {\color{line}0.2s} (28s)& 20.1s \\
 Smooth degree $4$ variety in $\pp^{10}$ & - & 56.8s  & {\color{line}6.4s }& {\color{line}2.2s}& - \\
 Smooth degree $6$ variety in $\pp^{7}$ & - & -  & {\color{line}148.5s } & {\color{line}77.7s}& - \\
 Deg. $12$ hypersurface in $\pp^3$ & 25.3s & 1.0s  & {\color{line}0.2s} &{\color{line} 0.1s}& n/a \\
 Degree $3$ variety in $\pp^8$ & - & 85.2s  & {\color{line}2.2s }& {\color{line}1.0s}& n/a \\
 Degree $5$ variety in $\pp^{10}$ & - & -  & {\color{line}10s } & {\color{line}2.3s}& n/a \\
 Degree $16$ variety in $\pp^5$& - & -  & {\color{line} 1.3s }& {\color{line}0.3s}& n/a \\
\bottomrule
 \end{tabular}}
 \caption{Comparison of Algorithm \ref{algorithm:CSMAlg} (csm\_polar) with different known algorithms to compute the $c_{SM}$ class and Euler characteristic of a projective variety. The - denotes a computation that did not finish after running for ten minutes (600s), n/a indicates the variety is singular and hence the algorithm euler is not applicable. Numeric timings are given in brackets (-) where available, numeric computations taking longer than 600s are omitted. 
 \label{table:results}}
 \end{table}

We believe that given the favourable performance of Algorithm \ref{algorithm:SegreAlg} and Algorithm \ref{algorithm:CSMAlg} on a wide variety of examples we may conclude that these methods provide a useful complement to the existing methods which compute Segre and $c_{SM}$ classes and the Euler characteristic for subschemes of projective space.

\normalsize
 
 \section*{Acknowledgments}
This research was partially supported by the Natural Sciences and Engineering
Research Council of Canada (NSERC). Also the author would like to thank Eric Schost for many helpful discussions on the content of this note.

 \newpage

\appendix
\section{Appendix} \label{App:Appendix}
In this appendix we give the examples used for testing in Tables \ref{table:SegreResults} and \ref{table:results}. The examples are given in the form of plain text Macaulay2 \cite{M2} code. We assume that the function to compute Segre classes is named Segre and the function to compute $c_{SM}$ classes is named CSM. This is the convention used in our M2 package ``CharClassCalc" available at \url{https://github.com/Martin-Helmer/char-class-calc}.

Below are the examples listed in Table \ref{table:SegreResults} which are used for testing the performance of Algorithm \ref{algorithm:SegreAlg}, our algorithm for computing the Segre class of a projective variety. 
\scriptsize \vspace{-5mm}
\begin{verbatim}
----------------------------------------------------
--Segre Examples
----------------------------------------------------
needsPackage "CharClassCalc"
TEST ///
--Rational Normal curve in P^7
    n=7; R=ZZ/32749[y_0..y_n];
    M = matrix{{y_0..y_n},{y_1..y_n,y_0}};
    I=minors(2,M);
    time Segre I
///
TEST ///
--Segre embedding of P^2xP^3 in P^11
    n=11; R=ZZ/32749[x_0..x_n];
    M = matrix{{x_0,x_1,x_2,x_3},{x_4,x_5,x_6,x_7},{x_8,x_9,x_10,x_11}};
    I=minors(2,M);
    time Segre(I)
///
TEST ///
--Smooth degree 81 variety in P^7
    n=7; R=ZZ/32749[y_0..y_n];
    I = ideal(random(3,R),random(3,R),random(3,R),random(3,R));
    time Segre(I) 
///
TEST ///
--Degree 10 variety in P^8
    n=8; R=ZZ/32749[y_0..y_n];
    M = matrix{{random(1,R),random(1,R),random(1,R)},
    {random(1,R),random(1,R),random(1,R)},
    {random(1,R),random(1,R),random(1,R)},
    {random(1,R),random(1,R),random(1,R)}};
    I=minors(2,M);
    time Segre(I)
///
TEST ///
--Degree 21 variety in P^9 
    n=9; R=ZZ/32749[x_0..x_n];
    I=ideal((4*x_3*x_2*x_4-x_0^3)*x_1^3,x_5^3*(x_0*x_1*x_4-x_2^3),
    x_9^3*(x_7*x_8*x_6-x_4^3)-x_7^5*x_0,
    7*x_1^3*(x_2*x_1*x_6-x_9^3)-3*x_2^3*x_0^3);
    time Segre(I)
///
TEST ///
--Degree 48 variety in P^6 
    n=6; R=ZZ/32749[x_0..x_n];
    M = matrix{{x_1*x_4^2*x_3-x_1^4,random(4,R),random(4,R)},
    {random(4,R),x_0^2*x_5^2-x_6^3*x_0,random(4,R)}};
    I=minors(2,M);
    time Segre(I)
///    
\end{verbatim} 
\normalsize Below are the examples listed in Table \ref{table:results} which are used for testing the performance of Algorithm \ref{algorithm:CSMAlg}, our algorithm for computing the $c_{SM}$ class of a projective variety.  \scriptsize
\begin{verbatim}
----------------------------------------------------
--CSM Examples
----------------------------------------------------
TEST ///
--Twisted Cubic
    n=3; R=ZZ/32749[x_0..x_n];
    K=ideal(x_1*x_3-x_2^2, x_2*x_0-x_3^2,x_1*x_0-x_2*x_3)
    time CSM K
///
TEST ///
--Segre embedding of P^1xP^2 in P^5
    n=5; R=ZZ/32749[y_0..y_n];
    I=ideal(y_0*y_4-y_1*y_3,y_0*y_5-y_2*y_3,y_1*y_5-y_4*y_2);
    time CSM I    
///
TEST ///
--Smooth degree 8 variety in P^4
    n=4; R=ZZ/32749[z_0..z_n];
    K=ideal(-11796*z_0^2 + 2701*z_0*z_1 + 10725*z_1^2 - 11900*z_0*z_2 - 
    11598*z_1*z_2+ 11286*z_2^2 + 5210*z_0*z_3 - 7485*z_1*z_3 + 11208*z_2*z_3 
    + 5247*z_3^2 -4745*z_0*z_4 - 15915*z_1*z_4 + 14229*z_2*z_4 - 11236*z_3*z_4 + 
    10583*z_4^2, 6934*z_0^2 + 1767*z_0*z_1 + 9604*z_1^2 - 4343*z_0*z_2 - 10848*z_1*z_2 -
    16357*z_2^2 + 8747*z_0*z_3 - 13140*z_1*z_3 - 7136*z_2*z_3 + 3115*z_3^2 -
    3741*z_0*z_4 + 14969*z_1*z_4 + 10956*z_2*z_4 - 10016*z_3*z_4 + 13449*z_4^2,
    12153*z_0^2 - 4789*z_0*z_1 - 9183*z_1^2 - 15107*z_0*z_2 - 5045*z_1*z_2 +
    6082*z_2^2 - 13665*z_0*z_3 + 4455*z_1*z_3 - 3129*z_2*z_3 + 14146*z_3^2 -
    1424*z_0*z_4 + 11305*z_1*z_4 + 4882*z_2*z_4 - 14665*z_3*z_4 - 10270*z_4^2)
    time CSM(K)
///
TEST ///
--Smooth degree 4 variety in P^10 
   n=10;R=ZZ/32749[x_0..x_n];
   I=ideal(random(2,R),random(2,R));
   time CSM I
///
TEST ///
--Smooth degree 6 variety in P^7
   n=7; R=ZZ/32749[y_0..y_n];
   I=ideal(2*y_0^3+12*y_1^3+96*y_2^3 + 19*y_3^3+12*y_4^3+y_6^3+5*y_7^3, random(2,R));
   time CSM(I)
///
TEST ///
--Degree 12 hypersurface in P^3
   n=3;  R=ZZ/32749[x_0..x_n];
   I=ideal(x_2^6*x_3^6+3*x_1^2*x_2^4*x_3^4*x_0^2+3*x_1^4*x_2^2*x_3^2*x_0^4-3*x_2^4*x_3^4*x_0^4+
   x_1^6*x_0^6+21*x_1^2*x_2^2*x_3^2*x_0^6-3*x_1^4*x_0^8+3*x_2^2*x_3^2*x_0^8+3*x_1^2*x_0^10-x_0^12)
   time CSM(I)
///
TEST ///
--Degree 3 variety in P^8
   n=8; R=ZZ/32749[x_0..x_n];
   M = matrix{{random(1,R),random(1,R),random(1,R)},{random(1,R),random(1,R),random(1,R)}};
   I=minors(2,M);
   time CSM(I)   
///
TEST ///
--Degree 5 variety in P^10  
   n=10; R=ZZ/32749[x_0..x_n];
   M = matrix{{x_0^2-x_1^2,22*x_3-35*x_9-13*x_2,x_9-x_7+5*x_3},
   {x_8+9*x_0+4*x_1,7*x_1-33*x_5+23*x_6,random(1,R)}};
   I=minors(2,M);
   time CSM(I)
///
TEST ///
--Degree 16 variety in P^5
   n=5; R=ZZ/32749[x_0..x_n];
   I=ideal((4*x_3*x_2*x_4-x_0^3)*x_1,x_5*(x_0*x_1*x_4-x_2^3))
   time CSM(I)
///
\end{verbatim}\normalsize
\newpage
\newpage
\bibliographystyle{plain}
\bibliography{refs}
\end{document}